\newtheorem{theorem}{Theorem}
\newtheorem{lemma}{Lemma}
\newtheorem{proposition}{Proposition}
\newtheorem{corollary}{Corollary}
\theoremstyle{definition}
\newtheorem{definition}{Definition}
\theoremstyle{remark}
\newtheorem*{remark*}{Remark}
\numberwithin{equation}{section}		 
\numberwithin{proposition}{section}			 
\numberwithin{table}{section}				 
\numberwithin{definition}{section}
\numberwithin{theorem}{section}
\numberwithin{corollary}{section}
\numberwithin{exercise}{section}
\numberwithin{question}{section}
\title{
		\vspace{-1in} 	
		\usefont{OT1}{bch}{b}{n}
		\normalfont \normalsize \textsc{} \\ [25pt]
		\huge Exact discretization of harmonic tensors
}
\author{\normalfont \large 
 T. Chumley\footnote{\scriptsize Department of Mathematics and Statistics, Mount Holyoke College},
\  R. Feres\footnote{\scriptsize Department of Mathematics and Statistics, Washington University in St.~Louis},
\  M. Wallace\footnote{\scriptsize School of Medicine, Washington University in St.~Louis}
}
\begin{document}

\date{}

\maketitle

\begin{abstract}
 Furstenberg \cite{furstenberg} and Lyons and Sullivan \cite{lyonssullivan} have shown how to discretize harmonic functions on a Riemannian manifold $M$ whose Brownian motion satisfies a certain recurrence property called $\ast$-recurrence. We study analogues of this discretization for tensor fields which are harmonic in the sense of the covariant Laplacian. We show that, under certain restrictions on the holonomy of the connection, the lifted diffusion on the orthonormal frame bundle has the same $\ast$-recurrence property as the original Brownian motion. This observation leads us to introduce a technique we call scalarization which reduces the problem of discretization for a tensor field to that of ordinary harmonic functions.  
\end{abstract}

\section{Introduction}
\label{intro} 
Inspired by an idea of Furstenberg \cite{furstenberg}, Lyons and Sullivan \cite{lyonssullivan} introduced a discretization procedure for harmonic functions on a Riemannian manifold $M$. Phrased in probabilistic language, they showed that if the Brownian motion on $M$ satisfies a certain recurrence property, then it is equivalent to a random walk on a certain discrete subset $X$ of $M$. Since this seminal work, there has been an ongoing program to study the connection between the potential theory of the Laplacian on Riemannian manifolds and the potential theory of Markov chains on discrete subsets through the use of similar discretization techniques \cite{BL, bak, kaimanovich}. 

Along such lines, our purpose here is to show that the Furstenburg-Lyons-Sullivan idea can be extended to discretize tensor fields on $M$ which are harmonic in the sense of the covariant Laplacian. 

Following \cite{lyonssullivan}, we consider the case where $M$ is a covering of a compact Riemannian base manifold $B$. 
It is natural to frame our discretization in the language of groupoids of linear isometries. For this purpose we introduce some terminology: by a {\em linear isometry} on the Riemannian manifold $M$ over a pair $(x,y)\in M\times M$, we mean a linear map $\sigma:T_xM\rightarrow T_yM$ that is orthogonal relative to the Riemannian inner products at $x$ and $y$. The collection of all linear isometries constitutes a fiber bundle \[
(s,t):\mathcal{I}(M)\rightarrow M\times M
\] 
where each $\sigma\in \mathcal{I}(M)$ is a linear isometry above the pair $(x,y)=(s(\sigma), t(\sigma))$. Note that $\mathcal{I}(M)$ is a groupoid for which $M$ may be identified with the set of identities, $M=\mathcal{I}(M)_0$. The set $\mathcal{I}(M)_2$ of composable elements consists of the pairs $(\sigma, \eta)\in \mathcal{I}(M)\times \mathcal{I}(M)$ for which  $t(\sigma)=s(\eta)$, so that  the composition $\eta\circ \sigma$ is defined and belongs to $\mathcal{I}(M)$.
  
\begin{definition}
Let  $M$ be a Riemannian manifold and $X$ a subset of $M$. A {\em groupoid of linear isometries over $X$} is a subgroupoid of $\mathcal{I}(M)$ with $\mathcal{G}_0=X$. We set $\mathcal{G}_x=\{\sigma\in \mathcal{G}: s(\sigma)=x\}$, $\mathcal{G}^y=\{\sigma\in \mathcal{G}: t(\sigma)=y\}$, and $\mathcal{G}_x^y=\mathcal{G}_x\cap \mathcal{G}^y$.
  \end{definition}
  
\begin{definition}
A {\em random walk} on a countable groupoid $\mathcal{G}$ is a family $\mu=\{\mu_\sigma: \sigma\in \mathcal{G}\}$ where $\mu_\sigma$ is a probability measure on $\mathcal{G}_{t(\sigma)}$.
  We call $\mu$ the family of {\em transition probabilities}. 
\end{definition}
  
By a tensor field $\tau$ on a subset $X\subset M$, we mean simply a family of tensors $\tau_x$ on $T_xM$ for each $x\in M$. Thus, a tensor field on $M$ gives rise by restriction to a tensor field on any subset of $M$. We will define a notion of harmonicity for such restricted tensors by averaging over a random walk, as follows:
 
\begin{definition}\label{def:mu-harmonic}
Let $\mu$ define a random walk on a countable groupoid of linear isometries $\mathcal{G}$ over $X\subset M$. A tensor field $\tau$ on $X$ will be called {\em $\mu$-harmonic} if for each $x\in X$  
\[
\tau_x=\sum_{\sigma\in \mathcal{G}_x} \mu_x(\sigma) \sigma^{-1}\tau_{t(\sigma)}.
\]
Here, $\sigma^{-1}$ denotes the canonical extension of the isomorphism $\sigma^{-1}:T_{t(\sigma)}M\rightarrow T_{s(\sigma)}M$ to the corresponding tensor spaces. Also, recall that we identify $M$ with $\mathcal{I}(M)_0$, so that the $\mu_x=\mu_{\mathrm{id}_{T_xM}}$
\end{definition}

With these definitions, we can state our result as follows:

\begin{theorem}
Let $M$ be a covering space of a real analytic compact Riemannian manifold $B$. Let $X\subset M$ be the inverse image of a single point $x\in B$. Then there exists a countable groupoid  $\mathcal{G}$ of linear isometries over $X$ and a random walk $\mu$ on $\mathcal{G}$ that discretizes bounded harmonic tensor fields. In other words, if $\tau$ is a bounded tensor field on $M$ harmonic for the covariant Laplacian, then $\tau|_X$ is a tensor field on $X$ harmonic for $\mu$ in the sense of Definition \ref{def:mu-harmonic}. If $M$ is simply connected, $\mathcal{G}$ may be chosen  so that  $\#\mathcal{G}_x^y=1$  for all $(x,y)\in X\times X$. 
\end{theorem}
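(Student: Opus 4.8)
The plan is to reduce the tensor case to the classical Lyons--Sullivan theorem for scalar harmonic functions via a \emph{scalarization} device that trades tensor fields on $M$ for scalar functions on the orthonormal frame bundle. First I would lift the Brownian motion on $M$ to the horizontal diffusion on the orthonormal frame bundle $\mathcal{O}(M)$ (equivalently, on the frame bundle $\mathcal{O}(B)$ downstairs, since $M\to B$ is a covering and the connection is pulled back). A covariant-harmonic tensor field $\tau$ on $M$ of a fixed type corresponds, by evaluating its components in a moving frame, to an $\mathbb{R}^N$-valued function $\tilde\tau$ on $\mathcal{O}(M)$ that is equivariant under the structure group $O(n)$ acting through the appropriate tensor representation $\rho$; the covariant Laplacian on $\tau$ matches Bochner's horizontal Laplacian on $\tilde\tau$, so $\tau$ is covariant-harmonic if and only if each component of $\tilde\tau$ is a bounded harmonic function on $\mathcal{O}(M)$ for the horizontal diffusion. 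Thus the problem becomes: discretize bounded harmonic functions for the horizontal diffusion on $\mathcal{O}(M)$, then push the discretization back down to $M$ while tracking the $O(n)$-equivariance, which is exactly what recasts the resulting scalar random walk on $\mathcal{O}(M)$ as a random walk on a groupoid of linear isometries over $X$.

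The key steps, in order: (1) Verify that the horizontal diffusion on $\mathcal{O}(M)$ satisfies the $\ast$-recurrence property. Here I would use that $\mathcal{O}(M)$ fibers over the compact manifold $\mathcal{O}(B)$ with compact fiber $O(n)$ — so $\mathcal{O}(M)$ is itself a covering of a compact Riemannian manifold (once we fix a bi-invariant metric on the fiber and combine it with the horizontal metric) — and that $\ast$-recurrence is a property of such coverings. This is where the hypothesis on the holonomy of the connection mentioned in the abstract enters: we need the relevant diffusion to be recurrent in the $\ast$-sense, which may require the structure group acting on the diffusion to be, effectively, compact or the holonomy to be contained in a subgroup for which the lifted process inherits recurrence. (2) Apply Lyons--Sullivan (as in \cite{lyonssullivan}) to $\mathcal{O}(M)\to\mathcal{O}(B)$ with the base point set $\tilde X=$ the preimage of a single frame, obtaining a discrete set $\tilde X\subset\mathcal{O}(M)$ and a random walk on it that discretizes bounded harmonic functions. (3) Project: each frame in $\tilde X$ lies over a point of $X$, and the random walk's transition steps between frames $p\mapsto q$ over $x\mapsto y$ determine a linear isometry $\sigma_{pq}=q\circ p^{-1}:T_xM\to T_yM$; collecting these and their compositions produces the groupoid $\mathcal{G}$ over $X$, and the Lyons--Sullivan transition measure pushes forward to the random walk $\mu$ on $\mathcal{G}$. (4) Check that scalar $\mu$-harmonicity of $\tilde\tau$ on $\tilde X$ together with $O(n)$-equivariance is equivalent to the tensor $\mu$-harmonicity of $\tau|_X$ in the sense of Definition~\ref{def:mu-harmonic} — this is a bookkeeping argument matching $\sigma^{-1}\tau_{t(\sigma)}$ with the action of $\rho(\sigma)$ on the frame-components. (5) For the simply connected case, observe that $M\to B$ is then the universal cover, $X$ is a single $\pi_1(B)$-orbit, and the Lyons--Sullivan construction can be arranged (by choosing the family of bounded harmonic measures equivariantly) so that between any two points of $X$ there is exactly one groupoid element, i.e. $\#\mathcal{G}_x^y=1$; concretely $\mathcal{G}_x^y$ is a single deck-transformation-induced isometry.

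The main obstacle I expect is step (1): establishing that the lifted horizontal diffusion genuinely inherits $\ast$-recurrence. The subtlety is that $\ast$-recurrence is not automatic for arbitrary Riemannian coverings — it is a hypothesis on $B$ in Lyons--Sullivan — and one must show that if the base Brownian motion is $\ast$-recurrent then so is the horizontal lift, despite the lifted metric on $\mathcal{O}(M)$ being genuinely different (it mixes horizontal and vertical directions). The cleanest route is probably to show that hitting probabilities and the relevant potential-theoretic comparison estimates on $\mathcal{O}(M)$ can be controlled by those on $M$ using the compactness of the fiber $O(n)$ and uniform ellipticity of the horizontal generator transverse to the fiber; the holonomy restriction in the abstract is what guarantees that the vertical component of the diffusion does not obstruct this comparison (e.g. by keeping the effective structure group compact so the whole construction stays inside a covering-of-compact setting). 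Once $\ast$-recurrence is in hand, steps (2)--(5) are essentially a transcription of \cite{lyonssullivan} plus representation-theoretic bookkeeping, with no further serious difficulty.
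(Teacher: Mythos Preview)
Your overall architecture matches the paper's: scalarize, apply Lyons--Sullivan upstairs, then translate the resulting Markov chain into a random walk on a groupoid of isometries via $\sigma = v\circ u^{-1}$. Steps (2)--(4) in your outline are essentially what the paper does in Section~4.

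The genuine gap is in step (1), and you have misdiagnosed the role of the holonomy hypothesis. You propose to work on $\mathcal{O}(M)$ and argue that $\ast$-recurrence follows because $\mathcal{O}(M)\to\mathcal{O}(B)$ is a covering of a compact manifold. But the diffusion in question is the \emph{horizontal} diffusion, whose generator $\Delta_{\mathcal{O}(M)}=\sum H_iH_i$ is a degenerate sub-elliptic operator, not the Laplace--Beltrami operator of any Riemannian metric you put on $\mathcal{O}(M)$. The Harnack condition (item 5 of Definition~\ref{starrec}) therefore cannot be obtained from Riemannian covering arguments; it requires Bony's Harnack inequality for H\"ormander operators (Theorem~\ref{harnack}), which in turn requires the bracket condition \eqref{eq:hormandercond} for the $H_i$. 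On $\mathcal{O}(M)$ this condition holds only when the infinitesimal holonomy is all of $\mathfrak{o}(d)$. If the holonomy group $G$ is a proper subgroup, the horizontal diffusion is confined to leaves of the foliation by holonomy sub-bundles, positive $\Delta_{\mathcal{O}(M)}$-harmonic functions are unconstrained transverse to those leaves, and the Harnack constant on any set meeting two leaves is infinite. So your ``uniform ellipticity transverse to the fiber'' does not rescue the argument, and compactness of $O(n)$ is not the issue---the structure group is always compact.

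The paper's fix is to pass to the holonomy reduction $P\subset\mathcal{O}(M)$ with structure group $G$. On $P$, Proposition~\ref{verticalpart} identifies the vertical part of the Lie algebra generated by the $H_i$ with the infinitesimal holonomy algebra $\mathfrak{g}'(\sigma)$; the real analyticity hypothesis forces $\dim\mathfrak{g}'(\sigma)$ to be constant (Proposition~\ref{propoinf}), hence equal to $\mathfrak{g}$, and H\"ormander's condition then holds on $P$. Harnack follows, and Proposition~\ref{prop:star-rec-bundle} lifts the $\ast$-recurrent set $X\subset M$ to a $\ast$-recurrent $\widetilde{X}\subset P$ by taking one representative per $G_0$-orbit. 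This also explains the simply connected claim: when $\pi_1(M)=0$ one has $G=G_0$, so $\widetilde{X}$ contains exactly one frame over each $x\in X$, and the isometry $v\circ u^{-1}$ between any two points of $X$ is unique.
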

  
\begin{remark*} 
The hypothesis of real analyticity covers many cases of interest, but it is used here for simplifying the exposition. More explicitly, we make use of this hypothesis in establishing a condition on the infinitesimal holonomy Lie algebra. Our result is stated in a slightly more general form as Theorem \ref{thm:main-theorem} below. 
\end{remark*}

We should note that the tensor fields we discretize are harmonic for the covariant Laplacian, which in general is different from the Hodge-de Rham Laplacian typically used for differential forms. Discretizing forms that are harmonic for the latter operator would be of some interest as well. For example, the possibility of discretizing the harmonic 1-form $\overline{\partial}f$, where $f$ is a harmonic function on a Kahler manifold $M$, and using a converse to Theorem 1.1 as in \cite{kaimanovich}, would lead to a discrete $\overline{\partial}$ operator. We wonder in this regard whether an intrinsic and probabilistic characterization of the discrete holomorphic functions (those which are the restriction of holomorphic functions on $M$)  can be obtained without reference to the ambient Kahler manifold itself. We leave this line of investigation to a future work.
  
The rest of the paper is organized as follows. In Section \ref{basic-disc}, we review the discretization of harmonic functions given in \cite{lyonssullivan}. In summarizing their work, we have taken care to emphasize the probabilistic interpretation of their procedure. In Section \ref{sec:geom-prelim}, we introduce some preliminary background on geometry of the orthogonal frame bundle and set the stage for showing that the Lyons-Sullivan discretization can be lifted to this setting. We conclude in Section \ref{sec:discretization} with a proof of our main theorem using the machinery built in Section \ref{sec:geom-prelim}.
  
 \section{The basic discretization argument} 
\label{basic-disc}
We begin in this section by reviewing the Lyons-Sullivan discretization, paying close attention to the generalization we have in mind. Details about these results can be found in \cite{kaimanovich} or \cite{lyonssullivan}.

\subsection{$\ast$-recurrent sets}

Consider the following set-up: $N$ is a manifold, $\mathcal{A}$ is a second-order linear differential operator on $N$, and $(\Omega, \mathcal{F}_t, \xi_t, \mathbb{P}_x)$ a diffusion process corresponding to $\mathcal{A}$. The main examples we have in mind for $N$ and $\mathcal{A}$ are: 
 \begin{enumerate} 
\item $N=(M,g)$, a $d$-dimensional Riemannian manifold and $\mathcal{A}=\frac{1}{2}\Delta_M$, the Laplace-Beltrami operator. 
\item $N=(M,g)$ as above, but $\mathcal{A}=\frac{1}{2}\Delta_M+Z$ for a drift vector field $Z\in\mathcal{T}(M)$. 
\item $N=O(M)$, the bundle of orthonormal frames over $M$, and $\mathcal{A}=\sum_{i=1}^r H_i^2$ where $H_1,\ldots,H_d$ are the canonical horizontal vector fields on $O(M)$. (See below for definitions.)
\end{enumerate}
In each of these situations, a function $h:N\rightarrow \mathbb{R}$ is called \emph{harmonic} if $\mathcal{A}h=0$; our interest is in discretizing such functions as in \cite{kaimanovich, lyonssullivan}. Although the results in these papers are stated for the case of Brownian motion only, we can see by repeating proofs verbatim that the discretization works in any of our cases of interest, so long as $\xi$ escapes precompact sets in finite time and possesses the following recurrence property:  

\begin{definition}\label{starrec} A discrete subset $X$ of $N$ is called \emph{$\ast$-recurrent} if, for each $x\in X$, there exist $E_x, V_x\subset M$ with the following properties:  
\begin{enumerate}
\item $x\in E_{x}\subset V_{x}$;
\item $V_{x}$ is precompact and open;
\item the $E_{x}$'s are closed and pairwise disjoint;
\item $E:=\bigcup_{x\in X}E_{x}$ is closed and recurrent for $\xi$; 
\item $\sup_{x\in X}C(V_{x},E_{x})=:C<\infty$ where $C(V,E)$ is the \emph{Harnack constant,} defined for $E\subset\subset V$ as \[
C(V,E):=\sup_{h}\sup_{x,y\in E}\frac{h(x)}{h(y)}.
\] where the sup ranges over all positive harmonic functions on $N$. 
\end{enumerate}\end{definition}

The condition on Harnack constants has the following probabilistic interpretation: given $y\in E$, let $x(y)$ be the unique index $x\in X$ for which $y\in E_{x}$, and then let $\varepsilon_{y}^{V_{x}}$ be the exit distribution from $V_{x}$ of $\xi_t$ starting at $y$. That is,  \[
\varepsilon_{y}^{V_{x}}\left[A\right]=\mathbb{P}_{y}\left[\xi_{\tau}\in A\right]
\]
where $A$ is a Borel subset of $\partial V_{x}$ and
\begin{equation}\label{eq:tau}
\tau=\inf\left\{ t>0\,:\,\xi_{t}\not\in V_{x(\xi_{0})}\right\} 
\end{equation} is the first exit time from $V_x$. Then item 5 in Definition \ref{starrec} is equivalent to the condition that all the exit distributions are mutually absolutely continuous, with 
\[
\frac{1}{C}\leq\frac{\mathrm{d}\varepsilon_{x}^{V_{x}}}{\mathrm{d}\varepsilon_{y}^{V_{y}}}\leq C\quad\forall y\in E,\, x=x(y).
\]
Since we are assuming that $\xi$ leaves precompact open sets, $\tau$ is finite a.s. and $\varepsilon_x^V(\partial V)=1$ whenever $x\in V$. 

\subsection{The Lyons-Sullivan discretization}

Suppose now that we are given a $\ast$-recurrent set $X$ as above. Define sequences of stopping times $\{\tau_n\}_{n\geq 0}$ and $\{T_n\}_{n\geq 1}$ inductively as follows: 
\begin{alignat*}{1}
\tau_0 &:=  \begin{cases} 0 &\mbox{if } \xi_0\not\in X \\ \tau &\mbox{if } \xi_0\in X \end{cases} \\
T_{n} & :=\inf\left\{ t>\tau_{n-1}\,:\,\xi_{t}\in E\right\} =\tau_{n-1}+T_{E}\circ\theta_{\tau_{n-1}}\\
\tau_{n} & :=\inf\left\{ t>T_{n}\,:\,\xi_{t}\not\in V_{x(\xi(T_{n}))}\right\} =T_{n}+\tau\circ\theta_{T_{n}}
\end{alignat*}
Here, $\tau$ is still defined as at \eqref{eq:tau} and $T_A$ denotes the first hitting time of $\xi_t$ to a set $A$, i.e. 
\[ 
T_A=\inf\{t>0 : \xi_t\in A\}.
\] 
Note that our assumptions on $\xi$ --- namely, $\ast$-recurrence and leaving precompact sets --- ensure that $T_n$ and $\tau_n$ are all a.s. finite for all $n$. Finally, define three random sequences in $N$:
\[
X_{n}=x\left(\xi(T_{n})\right),\qquad Y_{n}=\xi(T_{n}),\qquad Z_{n}=\xi(\tau_{n})
\] 

The idea of the discretization is apply the acceptance-rejection algorithm to the random sequence of exit distributions and $\varepsilon_{X_{n}}^{V_{X_{n}}}$ and $\varepsilon_{Y_{n}}^{V_{X_{n}}}$. Accordingly, we enlarge the sample space $\Omega$ to accommodate a sequence of random variables $\left\{ U_{n}\right\}$ such that, under each $\mathbb{P}_x$, $\{U_n\}$ and $\xi$ are all independent, and $U_n$ has a uniform distribution on $[0,1]$. Put $N_0=0$ and 
\[
N_k  :=\min\left\{ n\geq N_{k-1} \,:\,  U_{n}\leq\frac{1}{C}\frac{\mathrm{d}\varepsilon_{X_{n}}^{V_{X_{n}}}}{\mathrm{d}\varepsilon_{Y_{n}}^{V_{X_{n}}}}\left(Z_{n}\right)  \right\} \] for $k\geq 1$. With this setup, the $\ast$-recurrence condition ensures that the numbers \[ \mu_a(x) := \mathbb{P}_a\left[ X_{N_1} = x\right] \qquad (x\in X)\] form a probability distribution on $X$. In fact, $\left\{ X_{N_k} \, : \, k\geq 1\right\}$ is a Markov chain on $X$ which discretizes harmonic functions on $N$ in the following sense: 

  \begin{proposition}[\cite{kaimanovich, lyonssullivan}]\label{prop:lyons-sullivan} The sequence $\left\{X_{N_k}\,:\,k\geq1\right\}$ is a Markov chain on $X$ with transition probabilities \[p(x,y)=\mu_x(y)\qquad (x,y\in X).\] Furthermore, if $h$ is a bounded harmonic function on $N$, then $h|_X$ is a bounded harmonic function on $X$ in the sense that \[ h(x) = \sum_{y\in X} p(x,y)h(y).\] \end{proposition}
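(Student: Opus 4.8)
The plan is to reproduce the Lyons--Sullivan argument \cite{lyonssullivan, kaimanovich}, but organized so as to isolate the only two properties of $\xi$ that it actually uses --- the rest then carries over unchanged to the frame-bundle diffusion we need later.

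\textbf{Structural inputs.} First I would note that, because $h$ is bounded with $\mathcal{A}h=0$, the process $h(\xi_t)-h(\xi_0)$ is a bounded local martingale (by It\^o's formula / the martingale problem), hence a genuine martingale; and because $\xi$ leaves every precompact open set in finite time, each of the stopping times $\tau,T_n,\tau_n$ above is a.s.\ finite, so bounded convergence applied to $h(\xi_{\cdot\wedge S})$ yields the optional-stopping identity $\mathbb{E}_x[h(\xi_S)]=h(x)$ for each such $S$; in particular $\int_{\partial V_x}h\,\mathrm{d}\varepsilon_x^{V_x}=h(x)$ for every $x\in X$. Second I would prove the \emph{acceptance--rejection lemma}: writing $\mathcal{F}_{T_n}$ for the information available at time $T_n$, enlarged by $U_1,\dots,U_{n-1}$, the strong Markov property at $T_n$ gives that $Z_n=\xi(\tau_n)$ has conditional law $\varepsilon_{Y_n}^{V_{X_n}}$, while $U_n$ is independent uniform, so for any bounded $g$ on $\partial V_{X_n}$,
\[
\mathbb{E}\!\left[\,g(Z_n)\,\mathbf{1}_{\{\text{round }n\text{ accepted}\}}\ \middle|\ \mathcal{F}_{T_n}\right]
= \frac1C\int g\,\mathrm{d}\varepsilon_{X_n}^{V_{X_n}},
\]
a $\sigma(X_n)$-measurable quantity. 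Taking $g\equiv 1$ shows that each round is accepted with conditional probability exactly $1/C$, so $\mathbb{P}(N_1>m)\le(1-1/C)^m$ and $N_1<\infty$ a.s.\ (in particular $\mu_x$ is a genuine probability distribution on $X$); taking general $g$ shows that the accepted exit point, \emph{given its center $X_n$}, has law $\varepsilon_{X_n}^{V_{X_n}}$, having forgotten $Y_n$ and everything else.

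\textbf{Markov property and the kernel.} Since the event ``no acceptance before round $n$'' is $\mathcal{F}_{T_n}$-measurable (it depends only on $U_1,\dots,U_{n-1}$ and on $\xi$ up to $\tau_{n-1}\le T_n$), summing the lemma over $n$ promotes it to: conditioned on $\{X_{N_1}=y\}$, the point $Z_{N_1}=\xi(\tau_{N_1})$ has law $\varepsilon_y^{V_y}$ and is independent of $\mathcal{F}_{T_{N_1}}$. Now apply the strong Markov property at $\tau_{N_1}$: the indices $X_{N_2},X_{N_3},\dots$ are functionals of the post-$\tau_{N_1}$ path of $\xi$ together with the fresh uniforms $U_n$ ($n>N_1$), so their joint law depends on the past only through the law of $\xi(\tau_{N_1})$, i.e.\ only through $y$; this is the Markov property of $\{X_{N_k}\}_{k\ge1}$. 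To identify the transition kernel, observe that when $\xi$ is started at a point $y\in X$ the recipe sets $\tau_0=\tau$, so $\xi(\tau_0)$ \emph{also} has law $\varepsilon_y^{V_y}$, matching the post-acceptance law exactly; hence $p(y,z)=\mathbb{P}_y[X_{N_1}=z]=\mu_y(z)$.

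\textbf{Harmonicity of $h|_X$.} Combining the pieces: by optional stopping at the a.s.\ finite time $\tau_{N_1}$ one has $h(x)=\mathbb{E}_x[h(\xi(\tau_{N_1}))]=\mathbb{E}_x[h(Z_{N_1})]$; conditioning on $X_{N_1}$ and using the previous step together with $\int h\,\mathrm{d}\varepsilon_y^{V_y}=h(y)$ gives $\mathbb{E}_x[h(Z_{N_1})\mid X_{N_1}=y]=h(y)$, so $h(x)=\sum_{y\in X}p(x,y)h(y)$, as claimed.

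\textbf{Where the difficulty sits.} The combinatorics is light; the genuine work is analytic and measure-theoretic --- confirming that $h(\xi_t)$ is a true (not merely local) martingale and that optional stopping is legitimate at the unbounded but a.s.\ finite times $\tau_{N_1}$ (exactly where ``$h$ bounded'' and ``$\xi$ escapes precompact sets in finite time'' are consumed), and keeping the conditional-independence bookkeeping in the rejection lemma honest (the interplay of $\mathcal{F}_{T_n}$, $\mathcal{F}_{\tau_n}$ and $\sigma(U_1,\dots,U_n)$, and the passage from ``accepted at round $n$'' to ``$N_1=n$''). All of this is carried out in full in \cite{lyonssullivan, kaimanovich}, and uses nothing about $\xi$ beyond the two structural inputs above, which is precisely why the proof transcribes verbatim to the diffusion generated by $\sum_i H_i^2$ on $O(M)$ that we need in the sequel.
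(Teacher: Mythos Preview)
The paper does not supply its own proof of this proposition: it is stated with attribution to \cite{kaimanovich, lyonssullivan} and used as a black box, the only remark being that ``by repeating proofs verbatim'' the argument carries over from Brownian motion to the more general diffusions listed. Your proposal is a faithful and correct sketch of precisely that classical argument --- the martingale/optional-stopping input, the acceptance--rejection lemma with conditional acceptance probability $1/C$, and the resulting Markov property and harmonicity of $h|_X$ --- so there is nothing to compare and no gap to flag; if anything, you have supplied more detail than the paper does.
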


\begin{remark*}
In \cite{kaimanovich}, Kaimanovich shows that a converse holds under an additional hypothesis called the \emph{uniform core condition}. Namely, when this condition holds, any bounded function on $X$ which is harmonic with respect to random walk defined by $p(x,y)$ can be extended to a function on $M$ which is harmonic with respect to the Brownian motion. Thus, in this case, bounded harmonic functions on $M$ and bounded harmonic functions on $X$ are in one-to-one correspondence. 
\end{remark*}

\section{Geometric preliminaries} \label{sec:geom-prelim}

In this section, we gather some geometric preliminaries required for our discretization of harmonic tensors. To motivate, we briefly summarize the strategy:
\begin{enumerate}
\item Given a $\ast$-recurrent subset $X$ of $ M$, show that, under suitable hypotheses, there exists a corresponding $\ast$-recurrent subset $\widetilde{X}$ of $O(M)$, or of a suitable reduction $P$ of $O(M)$. 
\item Given a harmonic tensor field $\tau$ on $M$, pass to the \emph{scalarization} $F_\tau$; this is a tensor-valued map on $O(M)$ (or on $P$) whose component functions are harmonic for the \emph{horizontal Laplacian} defined below. 
\item Discretize $F_\tau$ according to the Lyons-Sullivan method of Section \ref{basic-disc}.
\item Translate the result into the language of random walks on groupoids of isometries.
\end{enumerate}
The lifting of $X$ to $\widetilde{X}$ in Step 1 requires controlling the corresponding Harnack constants in the definition of $\ast$-recurrence (cf. Item 5 in Definition \ref{starrec}). For this purpose, we recall in Section \ref{hypo-hormander} some facts about hypoellipticity and H\"{o}rmander vector fields, and in Section \ref{holo} some additional facts about the holonomy reduction. Using this material, we show in Section \ref{star-rec-subsets} the possibility of constructing $\ast$-recurrent subsets of $O(M)$ and $M$ with the required properties, under a certain hypothesis on the infinitesimal holonomy Lie algebra. (This assumption is likely not optimal, but we adopt it here for simplicity.) 

For the convenience of the reader, we begin by recalling the relationships between frame bundles, scalarizations and the rough Laplacian in Section \ref{sec:scalarization}. References for this material include \cite{bleecker, hsu, kn}. We defer Steps 3 and 4 to Section \ref{sec:discretization}.

\subsection{Frame bundles, scalarization and the rough Laplacian}  
\label{sec:scalarization}

Let $M$ be a $d$-dimensional smooth manifold. A \emph{linear frame at $x$} is a linear isomorphism $\sigma:\mathbb{R}^d\rightarrow T_x M$. The collection of all such frames is written $GL(M)$ and called the \emph{frame bundle} over $M$; it is a principal fiber bundle with structure group $GL(d,\mathbb{R})$ and a natural projection $\pi:GL(M)\rightarrow M$ which maps the frame $\sigma:\mathbb{R}^d\rightarrow T_x M$ in $GL(M)$ to the point $x$ in $M$. Similarly, if $M$ has a Riemannian metric $g$, then an \emph{orthonormal frame at $x$} is an isometry $\sigma:(\mathbb{R}^d,\overline{g})\rightarrow (T_x M, g_x)$, where $\overline{g}$ is the Euclidean metric on $\mathbb{R}^d$. The collection of all such maps is written $O(M)$ and called the \emph{orthonormal frame bundle} over $M$; it is a reduction of $GL(M)$ whose structure group is $O(d,\mathbb{R})$.

One reason for introducing $GL(M)$ and $O(M)$ is that tensor fields on $M$ can be lifted to tensor-valued maps on $GL(M)$ or $O(M)$. This procedure is called \emph{scalarization.} To define it, let $\mathcal{T}^r_s(M)$ be the space of $(r,s)$-tensor fields on $M$ (meaning $r$ upper indices and $s$ lower indices), and let $T^r_s\left(\mathbb{R}^d\right)$ be the tensor product 
\[
T_r^s\left(\mathbb{R}^d\right) = \underbrace{\mathbb{R}^d\otimes\cdots\otimes\mathbb{R}^d}_{r}\otimes\underbrace{\left(\mathbb{R}^d\right)^\ast\cdots\otimes \left(\mathbb{R}^d\right)^\ast}_{s}.
\] 
Equivalently, $T^r_s\left(\mathbb{R}^d\right)$ is the space of all multi-linear mappings 
\[
F:\underbrace{\left(\mathbb{R}^d\right)^\ast \times\cdots\times\left(\mathbb{R}^d\right)^\ast}_{r} \times \underbrace{\mathbb{R}^d\cdots\times \mathbb{R}^d}_{s}\rightarrow\mathbb{R}
\]  In terms of the latter definition we have a natural action of $GL(d,\mathbb{R})$ on $T^r_s\left(\mathbb{R}^d\right)$, namely 
\begin{equation}\label{eq:action}
gF(\omega^1,\ldots,\omega^r,v_1,\ldots,v_s) = F(\omega^1\circ g^{-1},\ldots,\omega^r\circ g^{-1},gv_1,\ldots, gv_s).
\end{equation}  for $g\in GL(d,\mathbb{R})$, $\omega^i\in \left(\mathbb{R}^d\right)^\ast$ and $v_j\in\mathbb{R}^d$. With these notations, we have: 

\begin{definition}[Scalarization of a tensor field]\label{def:scalarization}
Given $\tau\in\mathcal{T}^s_r(M)$, define the $T^r_s(\mathbb{R}^d)$-valued function $F_\tau$ on $GL(M)$ by the equivalention 
\[
F_\tau(\sigma):=\sigma^{-1} \tau(\pi(\sigma)).
\]
where $\sigma^{-1}:T^r_s M_x\rightarrow T^r_s\mathbb{R}^d$ indicates the isomorphism of the tensor algebras induced by the isomorphism $\sigma^{-1}:T_xM\rightarrow \mathbb{R}^d$ (cf. \cite{kn} Proposition I.2.12). The function $F_\tau$ is called the {\em scalarization} of $\tau$; it is equivariant in the sense that  
\[
F_\tau(\sigma g)=g^{-1}F_\tau(\sigma)
\]
for all $\sigma\in P$ and $g\in GL(d,\mathbb{R})$, where $g^{-1}$ acts on $F_\tau(\sigma)$ as at \eqref{eq:action}. \end{definition} 

\begin{remark*} 
If $f\in C^\infty(M)$ is a smooth function, i.e. a tensor field of type $(0,0)$, then the scalarization $F_f$ is simply the lift of $f$ to $O(M)$, i.e. $F_f(\sigma)=f(\pi(\sigma))$. 
\end{remark*}

Another reason for introducing frame bundles is the presence of invariantly defined \emph{standard horizontal vector fields}. To define these, we require that $O(M)$ be equipped with a \emph{connection,} i.e. a splitting of the tangent bundle $T O(M)$ as direct sum $TO(M)=H\oplus V$, where $H$ is the \emph{horizontal} subbundle defined by the connection, and $V$ is the \emph{vertical} subbundle defined by \[ 
V_\sigma = \{ Y\in T_\sigma O(M) : \pi_\ast Y = 0\}.
\]
Given a vector $X\in T_xM$, we write $X^h_\sigma$ for its \emph{horizontal lift} of $X$ to $H_\sigma$, where $\pi(\sigma)=x$. Precisely, $X^h_\sigma$ is the unique vector in $H_\sigma$ such that $\pi_\ast X^h_\sigma=X$. The horizontal lift of a vector field $X\in \mathcal{T}(M)$ is defined similarly. (See Section \ref{holo} below, or \cite{bc64,kn}, for more information on linear connections.) With this setup, we can define: 

\begin{definition}[Standard horizontal vector fields]\label{def:canonical}
Given $v\in \mathbb{R}^d$, define the horizontal vector field $H(v)$ on $O(M)$ by \[ H(v)_\sigma = \sigma(v)^h_\sigma\quad\mbox{ for } \sigma\in O(M).\] That is, $H(v)_\sigma$ is the horizontal lift to $T_\sigma O(M)$ of $\sigma(v)\in T_xM$. We write $H_i=H(e_i)$ where $e_i$ is the standard basis of $\mathbb{R}^d$ and call $H_1,\ldots, H_d$ the \emph{standard horizontal vector fields} on $O(M)$. 
 \end{definition}

With the notation of Definition \ref{def:canonical}, we define the \emph{horizontal Laplacian} on $O(M)$ by 
\begin{equation}
\label{eq:horizontal-laplacian}
\Delta_{O(M)} = \sum_{i=1}^d H_iH_i.
\end{equation} 
The horizontal Laplacian associated with the Levi-Civita connection is the lift of the Laplace-Beltrami $\Delta_M$ in the sense that (cf. \cite{hsu} Proposition 3.1.2)  \[ 
 (\Delta_M f)\circ \pi = \Delta_{O(M)}( f\circ\pi).
\] In view of the remark below Definition \ref{def:scalarization}, this last equation says that
\[
F_{\Delta_{M}f} = \Delta_{O(M)} F_f.
\]
In fact, the same is true if we replace the function $f$ by a tensor field $\tau\in\mathcal{T}^r_s(M)$ and $\Delta_Mf$ by the \emph{covariant Laplacian} $\Delta_M \tau$ defined by contracting (with respect to the metric) the two new indices $\nabla\nabla\tau \in \mathcal{T}^r_{s+2}(M)$. We summarize these facts in the following (cf. \cite{hsu} p. 193): 

\begin{proposition}\label{scalLap}
Let $\tau$ be a tensor field on $M$ and $\Delta_M \tau$ the covariant Laplacian of $\tau$. Then  \[
F_{\Delta_M\tau}= \Delta_{O(M)} F_\tau
\]
where on the right-hand side, $\Delta_{O(M)}$ is understood to act on each component. 
\end{proposition}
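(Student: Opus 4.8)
The plan is to deduce the identity from the infinitesimal relation between the standard horizontal fields and covariant differentiation, and then iterate. \textbf{Step 1 (first-order relation).} First I would establish that for $v\in\mathbb{R}^d$ and $\sigma\in O(M)$ over $x=\pi(\sigma)$, the componentwise action of $H(v)$ on the scalarization satisfies $\bigl(H(v)F_\tau\bigr)(\sigma)=\sigma^{-1}\bigl(\nabla_{\sigma v}\tau\bigr)$; in words, $H(v)F_\tau$ is the scalarization of $\nabla\tau$ with $v$ inserted into the new covariant slot. This follows from Definition~\ref{def:canonical} together with the standard fact that the integral curve $\gamma$ of $H(v)$ through $\sigma$ is horizontal, so that $\gamma(t)=P^c_{0,t}\sigma$ is the parallel frame along $c=\pi\circ\gamma$ and $\dot c(0)=\sigma v$ (in fact $c$ is a geodesic, though only the parallel-frame property is used). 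Then $\gamma(t)^{-1}=\sigma^{-1}\circ P^c_{t,0}$ on tensors, hence $F_\tau(\gamma(t))=\sigma^{-1}\bigl(P^c_{t,0}\,\tau(c(t))\bigr)$, and differentiating at $t=0$ turns the parallel-transported path of tensors into $\nabla_{\dot c(0)}\tau$. I would either cite this (e.g. \cite{hsu} Prop.~3.1.2, or \cite{kn}) or record the one-line computation above.

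\textbf{Step 2 (second-order relation).} Rather than iterating Step~1 formally — note that $\sigma\mapsto\sigma^{-1}(\nabla_{\sigma v}\tau)$ is \emph{not} the scalarization of a fixed tensor field, since $\sigma v$ moves with $\sigma$ — I would argue again through parallel transport. Fix $\sigma$ over $x$, let $\gamma_j$ be the integral curve of $H_j$ through $\sigma$ and $c_j=\pi\circ\gamma_j$. Since $\gamma_j(t)=P^{c_j}_{0,t}\sigma$, the vector $\gamma_j(t)e_i=P^{c_j}_{0,t}(\sigma e_i)$ is parallel along $c_j$, so applying Step~1 at $\gamma_j(t)$ and differentiating at $t=0$ gives $\bigl(H_jH_iF_\tau\bigr)(\sigma)=\sigma^{-1}\bigl(\nabla^2_{\sigma e_j,\,\sigma e_i}\tau\bigr)$, where $\nabla^2_{u,w}\tau:=\nabla_u\nabla_w\tau-\nabla_{\nabla_u w}\tau$ is the Hessian; the correction term $\nabla_{\nabla_{\sigma e_j}W}\tau$, with $W$ the parallel extension of $\sigma e_i$ along $c_j$, drops out precisely because $\nabla_{\sigma e_j}W=0$. (One small point to verify: the componentwise $H_j$-action commutes with inserting the constant vector $e_i$, which is immediate from $\mathbb{R}$-linearity of tensor components.)

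\textbf{Step 3 (trace).} Summing over $i=j$ and using that $\{\sigma e_1,\dots,\sigma e_d\}$ is an \emph{orthonormal} basis of $T_xM$ — the one place orthonormality of frames enters — I obtain $\bigl(\Delta_{O(M)}F_\tau\bigr)(\sigma)=\sum_{i}\sigma^{-1}\bigl(\nabla^2_{\sigma e_i,\sigma e_i}\tau\bigr)=\sigma^{-1}\bigl(\sum_i\nabla^2_{\sigma e_i,\sigma e_i}\tau\bigr)=\sigma^{-1}(\Delta_M\tau)=F_{\Delta_M\tau}(\sigma)$, the third equality being the definition of $\Delta_M\tau$ as the metric contraction of the last two slots of $\nabla\nabla\tau$. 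I expect the main obstacle to be essentially notational: keeping straight which tensor slot each $H_i$ creates, confirming that the Hessian correction term genuinely vanishes under parallel transport, and reconciling the index convention for $\nabla\nabla\tau$ in the statement with that for the Hessian $\nabla^2$. Once Step~1 is in hand and one commits to the parallel-transport argument in Step~2, the remainder is routine.
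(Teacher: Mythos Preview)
Your proof sketch is correct and is essentially the standard argument (parallel transport along integral curves of the $H_i$, then identifying the second derivative with the Hessian and tracing). Note, however, that the paper does not actually supply its own proof of this proposition: it is stated with a citation to \cite{hsu}, p.~193, and no proof environment follows. So there is nothing in the paper to compare against beyond that reference, and your argument is precisely the one found there.
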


\subsection{Hypoellipticity and H\"{o}rmander vector fields}
\label{hypo-hormander}
  
Let $N$ be a smooth manifold of dimension $d$ and  $A_1, \dots, A_r$ smooth vector fields on $N$. Then $\{A_i\}_{i=1}^r$ is said to satisfy \emph{H\"ormander's bracket condition} if

\begin{equation}
\label{eq:hormandercond}
\mathrm{Lie}_x(A_1,\ldots,A_r)=T_x N \mbox{ for all }  x\in N.
\end{equation}
Here, $\mathrm{Lie}_x(\{A_i\}_{i=1}^r)$ denotes the values at $x$ of the Lie algebra generated by $\{A_i\}_{i=1}^r$, i.e. the linear span of \[ \{A_i(x)\}_{i=1}^r \cup \{[A_i, A_j](x)\}_{i,j=1}^r \cup \{[A_i,[A_j,A_k]](x)\}_{i,j,k=1}^r\cup\cdots .\]
The importance of \eqref{eq:hormandercond} is that a celebrated theorem of H\"ormander (cf. \cite{hormander}) asserts the \emph{hypoellipticity} of the sum-of-squares operator \[ L=\sum_{i=1}^r A_iA_i\] when \eqref{eq:hormandercond} holds. In PDE theory, $L$ is called \emph{hypoelliptic} if $L\varphi =f$ (in the sense of distributions) implies that $\varphi$ is smooth wherever $f$ is. A probabilistic consequence of hypoellipticity is the following: let $\xi=\xi(t,x,w)$ be the solution to the Stratonovich SDE 
\[
\xi_t = \xi_0 + \int_0^t A_i\left(\xi_s\right)\circ dw^i(s)
\] 
with the initial condition $\xi_0=x$, where $w(t)=(w^1(t),\ldots,w^r(t))$ is the coordinate process on the path space $W_0^r(\mathbb{R}^r)$, made into an $r$-dimensional Brownian motion by the Weiner measure  $P^W$. According to Theorem V.1.2 of \cite{iw}, $\xi$ is the diffusion process associated with $L$ in the sense of Definition V.1.1, \emph{ibid.} In this situation, hypoellipticity of $L$ entails that the image measure of $P^W$ on $M$ induced by the mapping $w\mapsto \xi(t,x,w)$ is absolutely continuous with respect to a fixed Riemannian metric on $M$. In fact, more is true: 

\begin{proposition} Let $\mu$ be a smooth, nonvanishing measure on $M$. Then $\xi_t$ has a smooth, positive density $p_t(x,y)$ with respect to $\mu$, for all $t>0$.  \end{proposition}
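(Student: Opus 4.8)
The plan is to deduce this from the hypoellipticity of $L = \sum_{i=1}^r A_i A_i$ together with standard parabolic regularity and a support/strong-maximum-principle argument for the positivity. First I would recall that, since $\{A_i\}$ satisfies H\"ormander's bracket condition, the parabolic operator $\partial_t - L$ is hypoelliptic on $(0,\infty)\times M$ (H\"ormander's theorem applies to $\partial_t - L$ because adjoining the vector field $\partial_t$ to $\{A_i\}$ and bracketing recovers the full tangent space of $(0,\infty)\times M$ once \eqref{eq:hormandercond} holds on $M$). Consequently the transition kernel of $\xi_t$, viewed as a distribution, is represented by a smooth function: writing $\mathbb{P}_x[\xi_t\in dy]$ as a measure on $M$, one shows it has no singular part and that its density $p_t(x,y)$ against the fixed smooth nonvanishing measure $\mu$ is jointly smooth in $(t,x,y)\in(0,\infty)\times M\times M$. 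The cleanest route is: test against $\varphi\in C_c^\infty(M)$, note $u(t,x):=\mathbb{E}_x[\varphi(\xi_t)]$ solves $(\partial_t - L)u=0$ in the distributional sense with $u(0,\cdot)=\varphi$, invoke hypoellipticity to get $u$ smooth on $(0,\infty)\times M$, and then use the semigroup property $p_{t+s}=\int p_t\,p_s$ to upgrade smoothness to joint smoothness and to symmetry considerations as needed; absolute continuity with respect to $\mu$ is exactly the statement that the image measure is represented by such a function.

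Next I would establish strict positivity of $p_t(x,y)$. Since $p_t(x,y)\ge 0$ is already known, and $\int_M p_t(x,y)\,d\mu(y)=1$ (using that $\xi$ does not explode — here one uses that $\xi$ leaves precompact sets in finite time but returns, or more simply that on the relevant manifolds the diffusion is conservative; if conservativeness is not assumed one works with $\int \le 1$ and the argument below is unaffected), the set where $p_t(x,\cdot)>0$ is nonempty for each $t>0$. I would then argue positivity everywhere by one of two standard devices. The first is the Stroock--Varadhan support theorem: under H\"ormander's condition the bracket-generating property implies that from any $x$ the diffusion can reach any $y$ in arbitrarily short time with positive probability along nearly-horizontal paths, so $p_t(x,y)>0$ for all $t,x,y$. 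The second, more self-contained, is a chaining/Harnack argument: fix $t$, pick $y_0$ with $p_{t/2}(x,y_0)>0$, use the smoothness and the semigroup identity $p_t(x,y)=\int_M p_{t/2}(x,z)\,p_{t/2}(z,y)\,d\mu(z)$ together with the parabolic Harnack inequality (valid for sum-of-squares H\"ormander operators) to propagate positivity of $z\mapsto p_{t/2}(z,y)$ from a neighborhood of $y_0$ throughout $M$ by connectedness, hence $p_t(x,y)>0$.

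The main obstacle is the positivity step, not the smoothness step: smoothness is an essentially immediate consequence of H\"ormander's theorem applied to $\partial_t-L$, whereas strict positivity genuinely requires either the support theorem or a Harnack inequality in the subelliptic setting, and one must be careful that the Harnack constants and the reachability estimates are available for the precise operators in the three examples of interest — in particular for $L=\sum H_i H_i$ on $O(M)$, where H\"ormander's condition is not automatic and is exactly what the holonomy hypotheses of later sections are designed to secure. In the present proposition, however, \eqref{eq:hormandercond} is taken as a hypothesis, so I would simply cite the Stroock--Varadhan support theorem (or Theorem V.1.2 / the results of \cite{iw} together with a standard subelliptic Harnack inequality) to conclude positivity, and remark that conservativeness, hence $\int p_t\,d\mu=1$, holds in our applications because the base $B$ is compact so that $\Delta_M$, its drift perturbations, and the lift to $O(M)$ are all conservative.
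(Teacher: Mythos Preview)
The paper does not actually prove this proposition: it is stated without argument, and the reader is simply referred to \cite{bramanti} and \cite{jerison} for Gaussian estimates on the fundamental solution, from which smoothness and strict positivity follow. Your proposal therefore goes well beyond what the paper itself does, and the route you outline---parabolic hypoellipticity of $\partial_t-L$ for smoothness, then a support/Harnack argument for positivity---is the standard one underlying those references.

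One small caution on your positivity step. The Stroock--Varadhan support theorem together with Chow--Rashevskii shows only that the \emph{topological support} of the law of $\xi_t$ is all of $M$; for a smooth nonnegative density this does not by itself exclude isolated zeros. Likewise, in your chaining argument you pick $y_0$ with $p_{t/2}(x,y_0)>0$ and then propose to ``propagate positivity of $z\mapsto p_{t/2}(z,y)$ from a neighborhood of $y_0$'', but you have not yet exhibited any point at which $p_{t/2}(\cdot,y)$ is positive, so the chain has no starting link. The clean fix is Bony's strong maximum principle (which the paper already cites as \cite{bony}) applied to the nonnegative solution $(s,y)\mapsto p_s(x,y)$ of the adjoint parabolic equation: if it vanished at one space--time point it would vanish on the entire backward-reachable set, which under the bracket condition is everything, contradicting $\int p_s(x,y)\,d\mu(y)>0$. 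This is exactly what the Gaussian lower bounds in \cite{jerison} encode, so citing them---as the paper does---suffices.
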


In fact, one can prove Gaussian estimates for $p_t(x,y)$. See \cite{bramanti} and references therein for more information on H\"ormander's vector fields. In particular, see \cite{jerison} for Gaussian estimates in the smooth manifold case. It will also be important for us to know that Harnack inequalities hold. The following is a special case of Theorem 7.2 of \cite{bony}.
 \begin{theorem}[Harnack's inequality]\label{harnack}
Suppose the vector fields $\{A_i\}_{i=1}^r$ on $N$ satisfy H\"ormander's condition \eqref{eq:hormandercond}. Then for any open $U\subset N$ and compact $K\subset U$, there exists a constant $c=c(K,U)$, with the property that $\sup_{x\in K} u(x) \leq c u(y)$ whenever $f$ is a positive function in $U$ satisfying $Lf=0$. 
 \end{theorem}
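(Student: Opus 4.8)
The plan is to prove the Harnack inequality by the subelliptic analogue of Moser's iteration, carried out in the intrinsic geometry attached to the vector fields, and then to pass from small balls to the general pair $(K,U)$ by a chaining argument. Throughout I work with the \emph{control} (Carnot--Carath\'eodory) distance $d$ determined by $\{A_i\}_{i=1}^r$, its metric balls $B(x,\rho)$, and a fixed smooth nonvanishing measure $\mu$ on $N$. The one place where H\"ormander's condition \eqref{eq:hormandercond} enters quantitatively is in the underlying geometry: by the ball--box estimates of Nagel, Stein, and Wainger, $\mu$ is \emph{locally doubling} for these balls, i.e. $\mu(B(x,2\rho)) \leq C_D\,\mu(B(x,\rho))$ uniformly for $x$ in a fixed compact and $\rho$ small. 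Since the positive solution $u$ of $Lu=0$ is smooth by the hypoellipticity already recorded above, I may also put $L$ in divergence form for the energy method: writing $c_i$ for the smooth coefficient $-\operatorname{div}_\mu A_i$, a double integration by parts against a test function $\varphi$ gives
\[ \int_N (Lu)\,\varphi\, d\mu = -\int_N \sum_{i=1}^r (A_i u)(A_i\varphi)\, d\mu + \int_N \left(\sum_{i=1}^r c_i\, A_i u\right)\varphi\, d\mu, \]
so $L$ agrees, up to the bounded first-order term $\sum_i c_i A_i u$, with the divergence-form operator attached to the Dirichlet energy $\mathcal{E}(u) = \int_N \sum_{i=1}^r (A_i u)^2\, d\mu$.

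The heart of the matter is a scale-invariant Harnack inequality on a single small ball: there exist $C_0$ and $\rho_0$ so that every positive solution of $Lu = 0$ on $B(x_0,2\rho)$ satisfies $\sup_{B(x_0,\rho)} u \leq C_0 \inf_{B(x_0,\rho)} u$, uniformly for $x_0$ in a fixed compact and $0 < \rho < \rho_0$. I would obtain this by Moser iteration adapted to the subgradient $\nabla_A u = (A_1 u,\dots,A_r u)$: the doubling property together with the subelliptic Poincar\'e inequality of Jerison \cite{jerison}, namely $\int_{B(x,\rho)} |u-u_B|^2\,d\mu \leq C\rho^2 \int_{B(x,\kappa\rho)} |\nabla_A u|^2\, d\mu$, yields a Sobolev inequality on balls. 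Feeding this into the Caccioppoli estimates for $\mathcal{E}$ and iterating over shrinking balls produces both an $L^\infty$ bound for subsolutions (the $\sup$ estimate) and the weak Harnack inequality for supersolutions (the $\inf$ estimate); applying both to $u$, and absorbing the bounded drift $\sum_i c_i A_i u$ as a standard lower-order perturbation, gives the local Harnack inequality. Equivalently one may package this step through the two-sided Gaussian heat-kernel bounds of Jerison and S\'anchez-Calle (cf. \cite{jerison, bramanti}) and invoke the Grigor'yan--Saloff-Coste equivalence between such bounds and the scale-invariant parabolic Harnack inequality, whose time-independent case is exactly the assertion above.

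It remains to globalize. Since $K$ is compact and contained in the open set $U$, it has positive $d$-distance to $\partial U$; cover $K$ by finitely many balls $B(x_j,\rho)$ with $B(x_j,2\rho)\subset U$ and $\rho<\rho_0$. On each such ball the local Harnack inequality applies with the same constant $C_0$, and any two points of $K$ are joined by a chain of at most $m=m(K,U)$ overlapping balls lying in $U$; multiplying the estimates along the chain gives $\sup_K u \leq C_0^{\,m}\,u(y)$ for every $y\in K$, so that $c=C_0^{\,m}$ depends only on $K$ and $U$, as required.

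The main obstacle is the local Harnack inequality of the second paragraph, and within it the Poincar\'e inequality: this is the single ingredient where the Euclidean proof genuinely breaks down, because $L$ is only degenerate-elliptic and the controlling geometry is the control metric rather than the Riemannian one, so that the bracket-generating structure must be used to recover the ball--box volume estimates and the Sobolev embedding. Everything downstream — the Sobolev inequality, the iteration, and the chaining — is then essentially formal once doubling and Poincar\'e are in hand. I would also note the probabilistic route suggested by the framework of this paper: because $Lu=0$ yields the mean-value identity $u(x)=\int_{\partial V} u\, d\varepsilon_x^V$ over the exit distributions of the diffusion $\xi$ from a precompact $V$, the Harnack inequality is equivalent to the mutual absolute continuity of $\varepsilon_x^V$ and $\varepsilon_y^V$ with Radon--Nikodym derivative bounded in terms of $(K,U)$, for which the smooth positive transition density provided earlier is the natural tool; establishing this two-sided comparison of exit distributions is, however, of the same order of difficulty as the analytic argument above.
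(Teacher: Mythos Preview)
The paper does not prove this theorem at all: it simply records the statement as a special case of Theorem~7.2 of \cite{bony} and moves on. So there is no ``paper's proof'' to match against; the question is only whether your sketch is sound.

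Your argument is correct in outline and would yield the stated inequality. The route you take --- Nagel--Stein--Wainger doubling for the Carnot--Carath\'eodory balls, Jerison's subelliptic Poincar\'e inequality, Moser iteration on the Dirichlet form $\mathcal{E}(u)=\int\sum_i (A_iu)^2\,d\mu$ with the $\sum_i c_i A_i$ drift absorbed as a lower-order term, and then a finite Harnack chain to pass from balls to $(K,U)$ --- is the modern, robust derivation, and you correctly identify Poincar\'e as the substantive step. It is worth noting, though, that this is \emph{not} Bony's original proof: \cite{bony} predates the Nagel--Stein--Wainger and Jerison estimates and instead obtains Harnack from a strong maximum principle, propagating minima along the integral curves of the $A_i$ and their brackets, without any energy methods or control-metric volume estimates. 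Bony's argument is shorter and more elementary, and gives the qualitative Harnack constant $c(K,U)$ directly; yours is heavier but delivers a scale-invariant inequality on metric balls, which is exactly what one would want if one ever needed uniformity of the $L$-Harnack constants beyond the bare existence the paper uses in Definition~\ref{starrec}.
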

      
\subsection{Infinitesimal holonomy and hypoellepticity of the horizontal Laplacian}
\label{holo}

We now discuss some conditions which ensure that the standard horizontal vector fields of Definition \ref{def:canonical} satisfy H\"ormander's condition \eqref{eq:hormandercond}, so that the horizontal Laplacian defined at \eqref{eq:horizontal-laplacian} is hypoelliptic. Continue to let $M$ be a smooth connected manifold of dimension $d\geq 2$ with Riemannian metric $g$ and linear connection $H$ compatible with $g$. By \cite{kn} Proposition III.1.5, compatibility implies that the given connection is reducible to one on the orthonormal frame bundle $O(M)$ over $M$. 

In fact, by \cite{kn} Theorem II.7.2 (the reduction theorem) the given connection is reducible to one on $P(M,G)$, the holonomy reduction of $O(M)$. Here, $P$ is the holonomy sub-bundle through a fixed reference point $\sigma_0\in O(M)$ and $G$ is the corresponding holonomy group. Let $G_0$ be the restricted holonomy group, i.e. the component of the identity of $G$. Note that $P$ is connected, that $G_0$ is a normal Lie subgroup of $G$ and that $G/G_0$ is countable (cf. \cite{kn} Theorem II.4.2). The Lie algebra of $G$ (and of $G_0$) will be denoted $\mathfrak{g}$.

We write $\omega$ for the connection $1$-form associated with $H$, and $\Omega$ for the curvature. Recall that these are $\mathfrak{g}$-valued forms related by the \emph{structure equation} 
\begin{equation}\label{eq:structure}
\Omega = d\omega + \omega\wedge\omega.
\end{equation}
Recall also that $\omega$ is vertical (i.e., vanishes if its argument is horizontal) and $\Omega$ is horizontal (i.e., vanishes if any of its arguments is vertical). Thus, $\omega(X_u)$ is a measure of the vertical part of $X_u\in T_u P$. 
    
We now define the {\em infinitesimal holonomy Lie algebra} $\mathfrak{g}'(\sigma)$, following \cite{kn} page 96. Let $\sigma\in P$ and $\mathfrak{m}_1(\sigma)$ be the subspace of $\mathfrak{g}$ spanned by all elements of the form $\Omega_\sigma(X,Y)$, where $X, Y$ are horizontal vectors at $\sigma$. Inductively, define $\mathfrak{m}_k(\sigma)$, $k\geq 2$, to be the subspace of $\mathfrak{g}$ spanned by $\mathfrak{m}_{k-1}(\sigma)$ and by the values at $\sigma$ of all functions $f:P\rightarrow \mathfrak{g}$ of the form 
\[
f=U_k\dots U_1 \Omega(X,Y)
\] 
where $U_1,\ldots,U_k,X,Y$ are arbitrary horizontal vector fields on $P$. Finally, define $\mathfrak{g}'(\sigma)$ as  the union of the $\mathfrak{m}_k(\sigma)$ for $k\geq 1$. 

Proposition 10.4 of \cite{kn} shows that $\mathfrak{g}'(\sigma)$ is indeed a Lie subalgebra of $\mathfrak{g}$. The connected Lie subgroup of $G$ having the Lie algebra $\mathfrak{g}'(\sigma)$ is (by definition) the {\em infinitesimal holonomy group} at $\sigma$. In fact, $\mathfrak{g}'(\sigma)$ is a Lie subalgebra of $\mathfrak{g}^\ast(\sigma)$, the \emph{local holonomy Lie algebra} at $\sigma$, which in turn is Lie subalgebra of $\mathfrak{g}$:
\[ \mathfrak{g}'(\sigma)\subset\mathfrak{g}^\ast(\sigma)\subset \mathfrak{g} \]  
The following proposition, which summarizes Corollary II.10.7, Theorem II.10.8 and Proposition II.4.1 (b) of \cite{kn}, describes a condition which ensures that these Lie algebras coincide:
 
 \begin{proposition}\label{propoinf}
 If $\mathfrak{g}'(\sigma)$ has constant dimension on $P$, the infinitesimal, local and restricted holonomy Lie algebras (and their corresponding Lie groups) all coincide. This is the case, for example, when the connection is real analytic. 
\end{proposition}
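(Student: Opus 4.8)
The plan is to assemble the three cited results of \cite{kn} around the chain $\mathfrak{g}'(\sigma)\subseteq\mathfrak{g}^\ast(\sigma)\subseteq\mathfrak{g}$ recalled above, and then to translate equality of Lie algebras into equality of connected Lie groups. Two bookkeeping points run through the argument: first, the algebras attached to different points of $P$ are canonically identified by $\mathrm{Ad}$ of parallel transport along horizontal paths, so statements established ``at a point'' globalize because $P$ is connected; second, $\dim\mathfrak{g}'$ is constant on $P$ if and only if it is constant on $M$, since over a fixed $x\in M$ the algebras $\mathfrak{g}'(\sigma)$ with $\pi(\sigma)=x$ are mutually conjugate.

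First I would establish $\mathfrak{g}'(\sigma)=\mathfrak{g}^\ast(\sigma)$. This is Theorem II.10.8 of \cite{kn}: if $\dim\mathfrak{g}'$ is constant near $\pi(\sigma)$, then the infinitesimal and local holonomy Lie algebras at $\sigma$ coincide; under our global hypothesis this applies at every point. I would only sketch the mechanism: constancy of $\dim\mathfrak{g}'$ makes $\sigma\mapsto\mathfrak{g}'(\sigma)$ a smooth distribution on $P$ which is invariant under the horizontal flows --- applying a horizontal vector field sends $\mathfrak{m}_k$-valued generators to $\mathfrak{m}_{k+1}$-valued ones, and when the dimension cannot grow this derivative must already lie in the distribution --- so the holonomy of short horizontal loops stays inside $\mathfrak{g}'(\sigma)$, giving $\mathfrak{g}^\ast(\sigma)\subseteq\mathfrak{g}'(\sigma)$, while the reverse inclusion is automatic.

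Next I would promote this to $\mathfrak{g}^\ast(\sigma)=\mathfrak{g}$. By the previous step $\dim\mathfrak{g}^\ast=\dim\mathfrak{g}'$ is constant on the connected bundle $P$, which is the hypothesis of Corollary II.10.7 of \cite{kn}: a local holonomy algebra of locally constant dimension is, along horizontal paths, independent of the base point and equal to the restricted holonomy algebra $\mathfrak{g}$. The idea is that $\mathfrak{g}$ is generated via the Ambrose--Singer description by curvature terms $\mathrm{Ad}_\tau\,\Omega_v(X,Y)$ along \emph{all} horizontal paths from $\sigma$, whereas $\mathfrak{g}^\ast(\sigma)$ uses only nearby $v$, and constancy of dimension forbids the global family from spanning anything larger. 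To pass to groups: the restricted holonomy group is, by Proposition II.4.1(b) of \cite{kn}, the connected Lie subgroup of $G$ with Lie algebra $\mathfrak{g}$; the local holonomy group is connected by Theorem II.10.3; and the infinitesimal holonomy group is connected by definition. Connected Lie subgroups of a Lie group are determined by their Lie algebras, so the three groups coincide.

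It remains to see that real-analyticity of the connection forces the hypothesis. With the connection analytic, $P$ is a real-analytic manifold and $\Omega$ together with all its horizontal covariant derivatives are real-analytic $\mathfrak{g}$-valued objects; hence the curvature data throughout a neighborhood of $\sigma$ is recovered from its Taylor expansion at $\sigma$, whose coefficients lie in $\mathfrak{g}'(\sigma)$ by construction, so $\mathfrak{g}^\ast(\sigma)\subseteq\mathfrak{g}'(\sigma)$ and the two agree. But $\dim\mathfrak{g}'$ is always lower semicontinuous, being the supremum of the rank functions $\dim\mathfrak{m}_k$, while $\dim\mathfrak{g}^\ast$ is upper semicontinuous because local holonomy algebras can only shrink under small displacement of the base point; equality of the two algebras therefore makes $\dim\mathfrak{g}'$ locally constant, hence constant on connected $P$, and the analytic case is subsumed in the one already treated. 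The main obstacle here is not a hard estimate but the careful matching of the several holonomy algebras, their parallel-transport identifications, and which inclusion each cited result supplies; the one genuinely substantive input --- the ``constant dimension $\Rightarrow$ horizontally invariant distribution'' argument behind Theorem II.10.8 --- I would cite from \cite{kn} rather than reprove.
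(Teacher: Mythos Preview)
Your proposal is correct and follows exactly the paper's approach: the paper offers no independent proof of this proposition but simply states that it ``summarizes Corollary II.10.7, Theorem II.10.8 and Proposition II.4.1(b) of \cite{kn},'' which are precisely the three results you invoke and in the same logical order. Your elaboration of how the pieces fit together, and of the real-analytic case, goes beyond what the paper supplies, but the route is identical.
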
 

An alternate characterization of $\mathfrak{g}'(\sigma)$ will be more useful here. To state it, let $\mathcal{H}$ be the space of horizontal vector fields on $M$ (i.e. sections of $H$). Then let $\mathfrak{h}$ be the Lie algebra generated by $\mathcal{H}$, and $\mathfrak{h}(\sigma)$ the values at $\sigma$ of vector fields in $\mathfrak{h}$, i.e. 
\[
\mathfrak{h}(\sigma) = \{ A_\sigma \, : \, A\in \mathfrak{h} \}.
\]
With this setup, we have the following:

\begin{proposition}[\cite{omori}]
\label{verticalpart}
The infinitesimal holonomy Lie algebra of the connection at any $\sigma\in P$ is equal to the vertical part of $\mathfrak{h}(\sigma)$ in the sense that 
\[
\mathfrak{g}'(\sigma) = \omega_\sigma \left( \mathfrak{h}(\sigma) \right)\subset \mathfrak{g}.
\] 
\end{proposition}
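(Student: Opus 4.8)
The whole argument can be run off a single consequence of the structural equation. If $X$ is a horizontal vector field on $P$ and $B$ is any vector field on $P$, then evaluating $\Omega = d\omega + \omega\wedge\omega$ on $(X,B)$ and using the Cartan formula $d\omega(X,B) = X\omega(B) - B\omega(X) - \omega([X,B])$ together with $\omega(X)\equiv 0$ (so that $B\omega(X)=0$ and the quadratic term $(\omega\wedge\omega)(X,B)$ vanishes) yields
\[
\omega([X,B]) = X\big(\omega(B)\big) - \Omega(X,B) = X\big(\omega(B)\big) - \Omega\big(X,\,\mathrm{hor}\,B\big),
\]
the last equality because $\Omega$ is horizontal. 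Since horizontal projection is a smooth bundle map, $\mathrm{hor}\,B$ is again a horizontal vector field, hence lies in $\mathcal H\subseteq\mathfrak h$; this is the observation that keeps the recursions below inside $\mathfrak h$ and inside the curvature data defining the $\mathfrak m_k$.

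For the inclusion $\mathfrak g'(\sigma)\subseteq\omega_\sigma(\mathfrak h(\sigma))$, note first that for horizontal $X,Y$ the displayed identity reads $\Omega(X,Y) = -\omega([X,Y])$ as an identity of $\mathfrak g$-valued functions, so $\mathfrak m_1(\sigma)\subseteq\omega_\sigma(\mathfrak h(\sigma))$. For the higher $\mathfrak m_k$, whose generating functions are $U_k\cdots U_1\,\Omega(X,Y)$ with all fields horizontal, I would argue by induction on $k$ that each such function equals $\omega(B_k)$ for some $B_k\in\mathfrak h$: given $U_{k-1}\cdots U_1\,\Omega(X,Y)=\omega(B_{k-1})$, apply the identity with $X=U_k$ and $B=B_{k-1}$ to get
\[
U_k\big(\omega(B_{k-1})\big) = \omega\big([U_k,B_{k-1}]\big) + \Omega\big(U_k,\,\mathrm{hor}\,B_{k-1}\big) = \omega\big([U_k,B_{k-1}] - [U_k,\,\mathrm{hor}\,B_{k-1}]\big),
\]
which is $\omega$ of an element of $\mathfrak h$. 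Evaluating at $\sigma$ gives $\mathfrak m_k(\sigma)\subseteq\omega_\sigma(\mathfrak h(\sigma))$ for every $k$, and the union over $k$ is $\mathfrak g'(\sigma)$.

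For the reverse inclusion I would first reduce to left-normed brackets: repeated use of the Jacobi identity shows that $\mathfrak h$ is spanned by iterated brackets $W=[\,\cdots[[X_1,X_2],X_3]\cdots ,X_k]$ of horizontal vector fields (a standard fact about Lie algebras generated by a subset), so it suffices to control $\omega_\sigma(W_\sigma)$ for these. Writing $W=-[X_k,[\,\cdots[X_1,X_2]\cdots,X_{k-1}]]$ and unwinding the displayed identity step by step — each step peeling off the outermost horizontal field and producing one term that differentiates the previous $\omega$-value along a horizontal field and one curvature term $\Omega(\cdot,\mathrm{hor}(\cdot))$ of two horizontal fields — expresses $\omega(W)$, as a function, as a linear combination of functions of exactly the form $U_\ell\cdots U_1\,\Omega(V_1,V_2)$ with all fields horizontal, i.e. precisely the data defining the $\mathfrak m_\ell$. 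Hence $\omega_\sigma(W_\sigma)\in\bigcup_\ell\mathfrak m_\ell(\sigma)=\mathfrak g'(\sigma)$, which completes the proof.

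I expect the one point needing genuine care to be bookkeeping rather than anything deep: pinning down the sign and normalization conventions in the structural equation so that the quadratic term really does drop when an argument is horizontal, and using the tensoriality of $\Omega$ at each step of both inductions to replace arbitrary vector fields by their horizontal projections — it is exactly this replacement, combined with $\mathrm{hor}(\cdot)\in\mathcal H$, that prevents the recursion from escaping $\mathfrak h$ or manufacturing curvature terms not of the admissible form. Once the identity $\omega([X,B]) = X(\omega(B)) - \Omega(X,\mathrm{hor}\,B)$ is in hand, the two inclusions are a single induction run in opposite directions.
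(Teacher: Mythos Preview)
Your proof is correct and follows essentially the same route as the paper's: both reduce everything to the consequence of the structural equation that, for horizontal $X$, $\omega([X,B]) = X(\omega(B)) - \Omega(X,\mathrm{hor}\,B)$, and then run an induction in each direction on bracket length. The paper packages the forward inclusion via the explicit formula $U_k\cdots U_1\Omega(X,Y) = -\omega(v[U_k,v[\cdots,v[X,Y]\cdots]])$ and then expands using $[H,vX]=[H,X]-[H,hX]$, while you encode the same recursion directly as $B_k = [U_k,B_{k-1}]-[U_k,\mathrm{hor}\,B_{k-1}]$; these are the same computation, and your single identity is arguably a cleaner organizing device than the paper's separate lemma plus splitting step.
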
  

In \cite{omori}, this result is claimed to follow from p. 96 of \cite{kn}. However, we did not understand how the Proposition follows from the cited text, and therefore feel more comfortable supplying a proof. 

\begin{lemma}\label{lemmaVer}
Let $X,Y,U_1,\ldots,U_k\in\mathcal{H}$. Then
\begin{equation}\label{identityOmega}U_k \dots U_1\Omega(X,Y)=-\omega\left(v\left[U_k, v\left[ U_{k-1}, \dots,  v\left[ X,Y \right] \dots  \right]     \right] \right). \end{equation}
 \end{lemma}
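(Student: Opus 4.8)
The plan is to prove Lemma~\ref{lemmaVer} by induction on $k$, exploiting the structural equation $\Omega = d\omega + \omega\wedge\omega$ together with the two defining properties of the connection form: that $\omega$ vanishes on horizontal vectors, and that $\Omega$ vanishes if any argument is vertical. First I would establish the base case $k=0$. For horizontal vector fields $X,Y\in\mathcal{H}$ we have $\omega(X)=\omega(Y)=0$, so the wedge term $(\omega\wedge\omega)(X,Y)$ drops out and the structural equation gives $\Omega(X,Y) = d\omega(X,Y) = X\,\omega(Y) - Y\,\omega(X) - \omega([X,Y]) = -\omega([X,Y])$. Now decompose $[X,Y] = h[X,Y] + v[X,Y]$ into horizontal and vertical parts; since $\omega$ kills the horizontal part, $\Omega(X,Y) = -\omega(v[X,Y])$, which is exactly \eqref{identityOmega} for $k=0$.

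Next I would carry out the inductive step. Assume \eqref{identityOmega} holds for $k-1$, i.e. $U_{k-1}\cdots U_1\Omega(X,Y) = -\omega\bigl(v[U_{k-1},v[\cdots,v[X,Y]\cdots]]\bigr)$. Write $W := v[U_{k-1},v[\cdots v[X,Y]\cdots]]$, a vertical vector field, so the inductive hypothesis reads $U_{k-1}\cdots U_1\Omega(X,Y) = -\omega(W)$. Applying $U_k$ and using that $U_k$ is horizontal, I want to show $U_k(\omega(W)) = \omega(v[U_k,W])$, or equivalently (after negating) the claimed formula. The identity to push through is the Cartan-type relation $d\omega(U_k,W) = U_k\,\omega(W) - W\,\omega(U_k) - \omega([U_k,W])$. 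Since $U_k$ is horizontal, $\omega(U_k)=0$, so $d\omega(U_k,W) = U_k\,\omega(W) - \omega([U_k,W])$. Now use the structural equation in the form $d\omega = \Omega - \omega\wedge\omega$: because $W$ is vertical, $\Omega(U_k,W)=0$, while $(\omega\wedge\omega)(U_k,W)$ involves $\omega(U_k)=0$ and hence also vanishes; therefore $d\omega(U_k,W)=0$. Combining, $U_k\,\omega(W) = \omega([U_k,W])$. Finally, $\omega$ kills the horizontal part of $[U_k,W]$, so $\omega([U_k,W]) = \omega(v[U_k,W])$, and substituting back into $U_k(-\omega(W)) = -U_k\,\omega(W) = -\omega(v[U_k,W])$ gives precisely \eqref{identityOmega} for $k$.

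The main obstacle I anticipate is purely bookkeeping: being careful that the iterated vertical brackets $v[U_{k-1},v[\cdots]]$ are genuinely vertical vector fields (so that $\Omega$ and the wedge term vanish on them), and that the Cartan formula for $d\omega$ is being applied to vector fields rather than individual tangent vectors, which is what lets the Lie-bracket term appear. One should also note at the outset that $U_k\cdots U_1\Omega(X,Y)$ denotes iterated directional derivatives of the $\mathfrak{g}$-valued function $\Omega(X,Y)$, matching the notation in the definition of $\mathfrak{m}_k(\sigma)$ above; with that reading the induction is a clean repeated application of the base-case computation. No hypellipticity, holonomy, or analyticity input is needed — the lemma is a formal consequence of the structural equation and the verticality/horizontality of $\omega$ and $\Omega$.
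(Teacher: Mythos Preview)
Your proof is correct and follows essentially the same route as the paper's: induction on $k$, with the inductive step reducing to showing $d\omega(U_k,W)=0$ for $U_k$ horizontal and $W$ vertical via the structural equation. The only cosmetic difference is that the paper phrases the inductive step using Cartan's formula $\mathcal{L}_{U_k}\omega = d(i_{U_k}\omega) + i_{U_k}d\omega$ together with $(\mathcal{L}_{U_k}\omega)(W) = U_k\omega(W) - \omega([U_k,W])$, whereas you use the invariant formula for $d\omega$ directly; these are the same computation.
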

 
\begin{proof}
 If $k=0$,  we have \[
 \Omega(X,Y)=-\omega([X,Y])=-\omega(v[X,Y])
 \]
by Corollary II.5.3 in \cite{kn} and the fact that $\omega$ is a vertical form.\footnote{Kobayashi and Nomizu follow a different convention regarding alternating tensors. For this reason, their formula contains an extra factor of $2$.} 

For the general case let us  write  $V:= v\left[ U_{k-1}, \dots,  v\left[ X,Y \right] \dots  \right]$. Then by induction
\[
U_k \dots U_1\Omega(X,Y)=-U_k\omega(V) = - (\mathcal{L}_{U_k}\omega)(V)-\omega([U_k,V]).\]
However, it follows from Cartan's formula and the structure equation \eqref{eq:structure} that 
\[
\mathcal{L}_U\omega(V) = d(i_U\omega)(V) + i_U(d\omega)(V) = d\omega(U,V) =0
 \] whenever $U\in\mathcal{H}$ and $V\in\mathcal{V}$.  Therefore 
 \[
 U_k \dots U_1\Omega(X,Y)= -\omega([U_k,V]) = -\omega(v[U_k,V])
 \]
and the result is true for all $k\geq 0$ by induction. 
 \end{proof}
 
By iterating the identity $[H,vX ]=[H,X]-[H,hX]$  we can express the left-hand side of Equation \eqref{identityOmega} in Lemma \ref{lemmaVer} as a linear combination of terms  $\omega([H_j', [H_{j-1}', \dots,[H_1',H_0']\dots]])$ for $j=1, \dots, k$, where the $H_i'$ are horizontal fields. 
 
\begin{proof}[Proof of Proposition \ref{verticalpart}]
The above remarks based on Lemma \ref{lemmaVer} show that the set of values at $\sigma$ of functions of the form 
 $f=U_k \dots U_1\Omega(X,Y)$ spans a linear subspace contained in $\omega_\sigma(\mathfrak{h}(\sigma))\subset \mathfrak{g}$. Recall that the infinitesimal holonomy Lie algebra $\mathfrak{g}'(\sigma)$ at $\sigma$ was defined earlier in terms of the set of values of functions of type $f$. Thus $\mathfrak{g}'(\sigma)$ is contained in $\omega_\sigma(\mathfrak{h}(\sigma))$. We need to show the opposite inclusion. This is done by induction on the number of brackets in an expression of the form
$[U_k, [U_{k-1}, \dots, [X, Y]\dots]].$ 

When $k=0$, $\omega_\sigma([X,Y])=-\Omega_\sigma(X,Y)\in \mathfrak{g}'(\sigma)$
and when $k=1$, 
$$\omega_\sigma([H_1, [X,Y]])= \omega_\sigma([H_1, h[X,Y]])+\omega_\sigma(H_1, v[X,Y]).$$
The first term on the right-hand side equals $-\Omega_\sigma(H_1, h[X,Y])$ while the second term, due to Lemma \ref{lemmaVer}, 
equals $-H_1\Omega(X,Y)$ at $\sigma$. Thus the desired conclusion holds for $k=0,1$.  Assuming it holds for $k-1$ for $k\geq 2$, then  by the   argument just used for $k-2$, and denoting $X:=[H_{k-1}, \dots, [X,Y]\dots]$,
$$ \omega_\sigma([H_k, X])=-\Omega_\sigma(H_k, hX)+ (H_1)_\sigma \omega(X).$$
The induction hypothesis applies to  the second term on the right-hand side to conclude the proof.
  \end{proof}

We apply these propositions as follows. In view of Definition \ref{def:canonical}, we can express $\mathcal{H}$ as 
\[ 
\mathcal{H} = \left\{  \sum_{i=1}^d f^i H_i \, :\, f^i \in C^\infty(M) \right\}
\]
and $\mathfrak{h}(\sigma)$ as the linear span of 
\[
\{H_i(\sigma)\}\cup \{[H_i,H_j](\sigma)\}\cup\{ [H_i,[H_j,H_k]](\sigma)\}\cup\cdots .
\]
Evidently, if $\mathfrak{g}'(\sigma)$ has constant dimension) holds as hypothesized in Proposition \ref{propoinf}, then Proposition \ref{verticalpart} implies that $\{H_i\}_{i=1}^d$ satisfies the H\"{o}rmander bracket condition at each $\sigma\in P$. Therefore the corresponding sum of squares operator
\[
\Delta_P := \sum_{i=1}^d H_i H_i
\] 
is hypoelliptic and enjoys all the properties of Section \ref{hypo-hormander}.

\subsection{$\ast$-recurrent subsets of the frame bundle}
\label{star-rec-subsets}
We are now prepared to construct a $\ast$-recurrent subset of $P$. In the interest of simplicity we shall henceforth assume the following:
\begin{enumerate}[label={(A{\arabic*})}]

\item $M$ is a Riemannian covering space of a compact, connected Riemannian manifold $B$ and $X$ is the inverse image in $M$ of a single point $x\in B$. 
\item $O(M)$ is equipped with the Levi-Civita connection induced by the metric $g$. 
\item The infinitesimal holonomy Lie algebra $\mathfrak{g}'(\sigma)$ has constant dimension, as in Proposition \ref{propoinf}. 
\end{enumerate}

In this case, the operator $L=\frac{1}{2}\Delta_P=\frac{1}{2}\sum_{i=1}^d H_iH_i$ is the horizontal lift of $A=\frac{1}{2}\Delta_M$, the Laplace operator. Furthermore, the set $X$ is manifestly $\ast$-recurrent for the diffusion on $M$ with generator $A$; the properties of Riemannian covering maps ensure that the condition in Definition \ref{starrec} on Harnack constants for $A$ is satisfied. 

We now study the same properties for the diffusion on $O(M)$ associated with $H$. In particular, we seek information about the Harnack constants for $L$. First we show that $L$ commutes with the group action in the following sense: 
 
\begin{proposition}\label{Lharm} With the notations above, \[ L(F\circ R_g)= (LF)\circ R_g\] for any $F\in C^\infty(P)$ and $g\in G$. 
\end{proposition}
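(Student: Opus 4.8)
The plan is to reduce the statement to the equivariance of the standard horizontal vector fields under the right action of the structure group. Recall (see \cite{kn}, Chapter III, or \cite{hsu}) that for $g$ in the structure group one has $(R_g)_\ast H(v) = H(g^{-1}v)$ for all $v\in\mathbb{R}^d$. Writing $g^{-1}e_i = \sum_j a^j_i\,e_j$, this gives $(R_g)_\ast H_i = \sum_j a^j_i\,H_j$, and the decisive feature is that the coefficients $a^j_i$ are \emph{constants}, independent of the point of $P$. Moreover, since $P$ is a reduction of $O(M)$, its structure group $G$ is a subgroup of $O(d,\mathbb{R})$; hence $a = (a^j_i)$ is an orthogonal matrix and $\sum_i a^j_i a^k_i = \delta^{jk}$. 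This orthogonality is the one algebraic fact the proof rests on, and it is the reason the statement is for $g\in G$ rather than for arbitrary elements of $GL(d,\mathbb{R})$.

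Next I would record how a second-order sum-of-squares operator transforms under a pushforward. If $\phi$ is a diffeomorphism and $V$ a vector field, then $V(F\circ\phi) = \big((\phi_\ast V)(F)\big)\circ\phi$ for all $F\in C^\infty$. Applying this twice with $\phi = R_g$ and $V = H_i$ --- first to $F$, then to the function $\big((R_g)_\ast H_i\big)F$ --- gives
\[
H_iH_i(F\circ R_g) = \Big(\big((R_g)_\ast H_i\big)\big((R_g)_\ast H_i\big)F\Big)\circ R_g ,
\]
and summing over $i$ yields
\[
\Delta_P(F\circ R_g) = \Big(\Big(\sum_{i=1}^d \big((R_g)_\ast H_i\big)\big((R_g)_\ast H_i\big)\Big)F\Big)\circ R_g .
\]
So it remains only to identify the operator $\sum_i \big((R_g)_\ast H_i\big)\big((R_g)_\ast H_i\big)$ with $\Delta_P = \sum_j H_jH_j$ itself.

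For this last step I would prove the basis-independence of the sum of squares: if $f_i = \sum_j a^j_i e_j$ for an orthogonal matrix $(a^j_i)$, then because $H(f_i) = \sum_j a^j_i H_j$ has constant coefficients,
\[
\sum_i H(f_i)H(f_i) = \sum_{j,k}\Big(\sum_i a^j_i a^k_i\Big)H_jH_k = \sum_{j,k}\delta^{jk}H_jH_k = \sum_j H_jH_j = \Delta_P .
\]
Taking $f_i = g^{-1}e_i$, which is an orthonormal basis precisely because $g^{-1}\in O(d,\mathbb{R})$, we conclude $\sum_i ((R_g)_\ast H_i)((R_g)_\ast H_i) = \Delta_P$, and therefore $\Delta_P(F\circ R_g) = (\Delta_P F)\circ R_g$. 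Dividing by $2$ gives $L(F\circ R_g) = (LF)\circ R_g$.

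The argument is essentially bookkeeping, so I do not expect a genuine obstacle; the only points needing care are (i) citing the equivariance $(R_g)_\ast H(v) = H(g^{-1}v)$ in the correct form, (ii) keeping track of the fact that the change-of-basis matrix has constant entries so that products of vector fields transform cleanly, and (iii) invoking metric compatibility (Assumption (A2)), which forces $G\subset O(d,\mathbb{R})$ and thus makes the cross terms cancel. One should also note at the outset that the $H_i$, restricted from $O(M)$ to the holonomy bundle $P$, are genuine vector fields on $P$ and still span its horizontal distribution --- this is part of the reduction theorem already used in Section \ref{holo} --- so that $\Delta_P$ is well defined and the computation takes place entirely on $P$.
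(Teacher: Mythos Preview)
Your proposal is correct and follows essentially the same approach as the paper: both use the equivariance $(R_g)_\ast H(v)=H(g^{-1}v)$ of the standard horizontal vector fields, expand $H(g^{-1}e_i)$ in the basis $H_j$ with constant coefficients, and cancel cross terms via the orthogonality $\sum_i (g^{-1})_{ji}(g^{-1})_{ki}=\delta_{jk}$. Your write-up is somewhat more explicit about why the constancy of the coefficients and the inclusion $G\subset O(d,\mathbb{R})$ are the crucial points, but the argument is the same.
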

 \begin{proof}
 Using $G$-invariance of the horizontal distribution and that $\pi\circ R_g = \pi$, it is easily shown that 
 $(d R_g)_\sigma H(e_i) = H(g^{-1}e_i)$ from which it can be shown to follow that 
 $$H(u)(f\circ R_g)= \left[H(g^{-1}u) f\right]\circ R_g$$ for any smooth function $f$ on $P$.
 It follows that
\begin{align*} \sum_i H(e_i) H(e_i) (f\circ R_g) &= \sum_i \left[H(g^{-1} e_i) H(g^{-1} e_i ) f\right]\circ R_g\\
&=\sum_{i,j,k} (g^{-1})_{ki} (g^{-1})_{ji} \left[H(e_k) H(e_j) f\right]\circ R_g\\
&=\sum_{j}  \left[H(e_j) H(e_j) f\right]\circ R_g.
\end{align*}
The last step is due to  $g$ being orthogonal.
 \end{proof}

 \begin{corollary}
The $L$-Harnack constant satisfies $C(E,V)=C(R_gE,R_gV)$, for all $g\in G$. 
 \end{corollary}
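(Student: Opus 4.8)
The plan is to combine Proposition \ref{Lharm} with the elementary fact that $R_g$ is a diffeomorphism of $P$ onto itself.

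First I would record that for $g\in G$ the right translation $R_g\colon P\to P$ is a diffeomorphism of $P$ (this uses that $P$ is the holonomy subbundle and $G$ its structure group, so that $P$ is $G$-stable), with inverse $R_{g^{-1}}$. Consequently $R_g$ carries closed sets to closed sets and precompact open sets to precompact open sets, so that whenever $E\subset\subset V$ we also have $R_gE\subset\subset R_gV$ and the Harnack constant $C(R_gE,R_gV)$ is defined in the sense of Definition \ref{starrec}. Next, let $\mathcal{P}$ denote the cone of positive functions $h$ on $P$ with $Lh=0$. By Proposition \ref{Lharm}, $L(h\circ R_g)=(Lh)\circ R_g$, so $h\in\mathcal{P}$ implies $h\circ R_g\in\mathcal{P}$ (positivity is clear since $R_g$ is surjective); applying the same statement to $g^{-1}$ shows that $h\mapsto h\circ R_g$ is a bijection of $\mathcal{P}$ onto itself.

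With this in hand, the corollary follows by unwinding the definition of the Harnack constant. Writing a general point of $R_gE$ as $R_g\sigma$ with $\sigma\in E$, I would compute
\[
C(R_gE,R_gV)=\sup_{h\in\mathcal{P}}\ \sup_{\sigma,\eta\in E}\frac{h(R_g\sigma)}{h(R_g\eta)}=\sup_{h\in\mathcal{P}}\ \sup_{\sigma,\eta\in E}\frac{(h\circ R_g)(\sigma)}{(h\circ R_g)(\eta)},
\]
and then, since $h\circ R_g$ ranges over all of $\mathcal{P}$ as $h$ does, the last expression equals $\sup_{h'\in\mathcal{P}}\sup_{\sigma,\eta\in E} h'(\sigma)/h'(\eta)=C(E,V)$.

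There is essentially no obstacle here: all the content is in Proposition \ref{Lharm}, and the remaining steps are bookkeeping. The only points requiring a word of care are the bijectivity of $h\mapsto h\circ R_g$ on $\mathcal{P}$ (immediate from Proposition \ref{Lharm} applied to both $g$ and $g^{-1}$) and the observation that the topological side conditions defining the domain of $C(\cdot,\cdot)$ are preserved by the diffeomorphism $R_g$.
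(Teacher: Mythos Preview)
Your proof is correct and follows the same approach as the paper's: both invoke Proposition \ref{Lharm} to conclude that precomposition with $R_g$ gives a bijection on the positive $L$-harmonic functions, from which equality of the Harnack constants is immediate by unwinding the definition. The paper's proof is a single sentence to this effect; yours simply spells out the bookkeeping more explicitly.
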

 
\begin{proof}
This follows from Proposition \ref{Lharm}, which implies that  $R_g$ maps bijectively   the positive  $L$-harmonic functions on  $V$ and those same functions on $R_g V$. 
  \end{proof}

Using this last Corollary, we can lift $X$ to a subset of the frame bundle which is still $\ast$-recurrent and has the same Harnack constants: 

\begin{proposition} \label{prop:star-rec-bundle} With assumptions (A1)-(A3) in force, let $P$ be the holonomy reduction of $O(M)$ through a fixed point $\sigma_0\in O(M)$. Let $\widetilde{X}\subset P$ be a set containing one representative point for each element in $P/G_0$ that maps into $X$ under the projection $P\rightarrow M$. Then $\widetilde{X}$ is a $*$-recurrent set for the diffusion process on $P$ associated with the operator $L$. 
\end{proposition}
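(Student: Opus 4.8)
The plan is to verify, one at a time, the five conditions of Definition \ref{starrec} for the set $\widetilde X\subset P$ relative to the diffusion $\xi$ generated by $L$. Write $\pi\colon P\to M$ for the projection and $\kappa\colon M\to B$ for the covering. I would first fix a convenient $\ast$-recurrent structure downstairs: choose concentric geodesic balls $E_B\subset\subset V_B$ about the marked point $x_B\in B$, small enough that $V_B$ is geodesically convex and evenly covered by $\kappa$. For $x\in X$ let $E_x$ (resp.\ $V_x$) be the connected component of $\kappa^{-1}(\overline{E_B})$ (resp.\ of $\kappa^{-1}(V_B)$) containing $x$; this is the standard $\ast$-recurrent structure on $X$ referred to in the text, and it satisfies $E_x\cong\overline{E_B}$ and $V_x\cong V_B$ via deck isometries, as well as $E_M:=\bigcup_{x\in X}E_x=\kappa^{-1}(\overline{E_B})$. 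To lift, I define $E_{\tilde x}$ (resp.\ $V_{\tilde x}$) to be the connected component of $\pi^{-1}(E_x)$ (resp.\ of $\pi^{-1}(V_x)$) containing $\tilde x$, where $x=\pi(\tilde x)$. Since $V_x$ and $\overline{E_B}$ are contractible, $P$ is trivial over them; and since the connection is metric (assumption (A2)) and $B$ is compact (assumption (A1)), the restricted holonomy group $G_0$ is a \emph{compact} subgroup of $O(d)$, so each $G_0$-coset in $G$ is compact and clopen, and the above components have the form $(\text{base})\times gG_0$ with $g$ running over $G/G_0$. Conditions (1)--(3) of Definition \ref{starrec} are then immediate: $V_{\tilde x}$ is open with compact closure; $E_{\tilde x}$ is closed --- a clopen slice of the closed set $\pi^{-1}(E_x)$ --- and even compact; $\tilde x\in E_{\tilde x}\subset V_{\tilde x}$; and the $E_{\tilde x}$ are pairwise disjoint, since distinct base points give disjoint $E_x$'s, while for a fixed $x$ distinct $\tilde x$ lie in distinct $G_0$-orbits of $\pi^{-1}(x)$, hence in distinct components of $\pi^{-1}(E_x)$.

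For condition (4) the point is that $\widetilde X$ meets \emph{every} $G_0$-orbit of $\pi^{-1}(x)$, for each $x\in X$ --- this is exactly what "one representative for each element of $P/G_0$ over $X$" means. So the $E_{\tilde x}$ over a fixed $x$ exhaust all components of $\pi^{-1}(E_x)$, and hence $E_P:=\bigcup_{\tilde x\in\widetilde X}E_{\tilde x}=\pi^{-1}(E_M)=(\kappa\circ\pi)^{-1}(\overline{E_B})$, which is closed. To see that $E_P$ is recurrent for $\xi$, I would project the process: since $L=\tfrac12\Delta_P$ is the horizontal lift of $\tfrac12\Delta_M$, the process $\pi\circ\xi$ is Brownian motion on $M$, so $\kappa\circ\pi\circ\xi$ is Brownian motion on the \emph{compact} manifold $B$. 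The latter is positively recurrent and a.s.\ visits the open set $E_B$ at arbitrarily large times; pulling back, $\xi$ a.s.\ visits $(\kappa\circ\pi)^{-1}(E_B)\subset E_P$ at arbitrarily large times. (The standing requirement that $\xi$ leave precompact open sets is likewise inherited from Brownian motion on $M$ via $\pi$, since $\pi$ maps precompacts to precompacts.)

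The substantive step is condition (5), the uniform bound $\sup_{\tilde x\in\widetilde X}C(V_{\tilde x},E_{\tilde x})<\infty$, and here the obstacle is that a deck transformation of $\kappa$ lifts to an isometry of $O(M)$ that in general does \emph{not} preserve the holonomy bundle $P$; thus one cannot move $(V_{\tilde x},E_{\tilde x})$ to $(V_{\tilde x'},E_{\tilde x'})$ by a symmetry of $(P,L)$ once $\pi(\tilde x)\ne\pi(\tilde x')$. My plan is to push the whole picture down to the orthonormal frame bundle $O(B)$ of $B$ and its holonomy reduction $P(B)$, with projection $q_B\colon P(B)\to B$. Since $V_B$ is evenly covered, $\kappa\colon V_x\to V_B$ is a genuine isometry, inducing a connection-preserving diffeomorphism $\Phi_x\colon O(M)|_{V_x}\xrightarrow{\ \sim\ }O(B)|_{V_B}$; it carries standard horizontal fields to standard horizontal fields, hence intertwines $\Delta_P$ with $\Delta_{P(B)}$. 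One checks, using that $P|_{V_x}$ is the $G$-saturation of the local holonomy bundle (which is a \emph{closed} $G_0$-subbundle over $V_x$, because $G_0$ is compact), that $\Phi_x$ carries the component $V_{\tilde x}$ of $\pi^{-1}(V_x)$ bijectively onto a connected component of $q_B^{-1}(V_B)$, and $E_{\tilde x}$ onto the corresponding component of $q_B^{-1}(\overline{E_B})$. Now every $\tilde x\in\widetilde X$ lies over $x_B$, so all these image components lie over the single ball $V_B$; as $V_B$ is contractible, $q_B^{-1}(V_B)$ is a trivial $G_B$-bundle ($G_B$ the holonomy group of $B$, with $\mathrm{Lie}(G_B)=\mathfrak g$ since infinitesimal holonomy is a local invariant), whose components are permuted transitively by the right $G_B$-action. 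Assumption (A3) likewise descends to $B$, so by Propositions \ref{propoinf} and \ref{verticalpart} the standard horizontal fields on $P(B)$ satisfy H\"ormander's condition; Theorem \ref{harnack} then gives a finite Harnack constant $C_0$ for one such pair (a component of $q_B^{-1}(V_B)$ together with the corresponding component of $q_B^{-1}(\overline{E_B})$), and by the $P(B)$-analogue of Proposition \ref{Lharm} and its Corollary --- right translations by $G_B\subset O(d)$ preserve $\Delta_{P(B)}$, hence preserve Harnack constants --- the same $C_0$ bounds every such pair. Finally, a positive $L$-harmonic function on $P$ restricts on $V_{\tilde x}$ and transports via $\Phi_x$ to a positive $\Delta_{P(B)}$-harmonic function on a component of $q_B^{-1}(V_B)$, with the same sup/inf ratio over $E_{\tilde x}$; hence $C(V_{\tilde x},E_{\tilde x})\le C_0$ for every $\tilde x$. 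This establishes condition (5) and completes the verification that $\widetilde X$ is $\ast$-recurrent.

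I expect condition (5) to be the real difficulty. Conditions (1)--(4) are essentially formal once $E_{\tilde x},V_{\tilde x}$ are taken to be connected components of the lifts of $E_x,V_x$, but the uniform Harnack bound genuinely requires the detour through the reduction $P(B)$ of $O(B)$, precisely because neither the right $G$-action on $P$ nor the deck transformations of $\kappa$ suffice to relate the pairs over different points of $X$ inside $P$ itself.
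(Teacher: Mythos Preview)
Your proposal is correct and follows essentially the same route as the paper. Both arguments define $E_{\tilde x},V_{\tilde x}$ as connected components of $\pi^{-1}(E_x),\pi^{-1}(V_x)$, and both handle the crucial Harnack condition (5) by transporting each pair $(V_{\tilde x},E_{\tilde x})$ via the covering isometry $V_x\cong V_B$ to the frame bundle over the single ball $V_B\subset B$, where hypoellipticity (from (A3) via Propositions \ref{propoinf} and \ref{verticalpart}) yields a finite Harnack constant and right-invariance of $L$ (Proposition \ref{Lharm} and its Corollary) makes that constant independent of the component. Your write-up is considerably more explicit than the paper's: you spell out conditions (1)--(4), invoke compactness of $G_0$ to identify the components, argue recurrence by projecting to Brownian motion on the compact base $B$, and name the target bundle $P(B)$ rather than leaving $\pi^{-1}(V)$ ambiguous --- but the underlying mechanism is the same.
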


Here, $G_0$ is the restricted holonomy group, which coincides with the infinitesimal holonomy group by (A3).
 
\begin{proof}
The main  remark is that under (A1) the horizontal diffusion operator $L$ is hypoelliptic. Now let $V$ be an open, connected subset of $N$ and 
$E$ a closed connected subset of $V$ containing $x$ in its interior.  The   inverse image components  of $E$ and $V$ under the covering map
will give a family of sets $E_{x'}, V_{x'}$ parametrized by the points $x'\in X$. Note that   $E_{x'}, V_{x'}$ are isometric to $E, V$, respectively,
so each pair has the same $\mathcal{A}$-Harnack constant as $E, V$. Now consider for each $\tilde{x}$ in $\widetilde{X}$ above
a given  $x'\in X$ the connected components of $\pi^{-1}(E_{x'})$ and $\pi^{-1}(V_{x'})$ containing $\tilde{x}$.
We call these sets $E_{\tilde{x}}$ and $V_{\tilde{x}}$. 
 Then (A2) will ensure
recurrence in the definition of $*$-recurrent set and (A1) guarantees hypoellipticity, which gives a Harnack inequality for $L$. 
The $\nabla$-affine isometry from $V_{x'}$ to $V$ given by the Riemannian covering space lifts to a diffeomorphism from $\pi^{-1}(V_{x'})$ to $\pi^{-1}(V)$ which commutes with $L$. It follows from this that the $L$-Harnack constant for every pair $E_{\tilde{x}}$, $V_{\tilde{x}}$ is the same. This gives that $\tilde{X}$ is $*$-recurrent.
\end{proof}
 
\section{Discretization of harmonic tensors}
\label{sec:discretization}  

We have now assembled all the pieces necessary for discretizing harmonic tensor fields; the rest is book keeping. In view of the discussion above concerning the holonomy Lie algebras, we can state our result in a slightly more general form than in Section \ref{intro}.

\begin{theorem}
\label{thm:main-theorem}
Let $M$ be a covering space of a real analytic compact Riemannian manifold $B$. Let $X\subset M$ be the inverse image of a single point $x\in B$. Then there exists a countable groupoid  $\mathcal{G}$ of linear isometries over $X$ that discretizes bounded harmonic tensors. If $M$ is simply connected, $\mathcal{G}$ may be chosen  so that  $\#\mathcal{G}_x^y=1$  for all $(x,y)\in X\times X$. 
\end{theorem}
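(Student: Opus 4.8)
The plan is to run the four-step program outlined at the start of Section 3, using the machinery already assembled. Given a bounded tensor field $\tau$ on $M$ harmonic for the covariant Laplacian $\Delta_M$, first I would pass to its scalarization $F_\tau$, a $T^r_s(\mathbb{R}^d)$-valued function on $O(M)$ (indeed on the holonomy reduction $P$, after restricting). By Proposition \ref{scalLap}, $\Delta_{O(M)} F_\tau = F_{\Delta_M \tau} = 0$, so each component of $F_\tau$ is a bounded function on $P$ annihilated by the horizontal Laplacian $\Delta_P = \sum_i H_i H_i$. Under the real-analyticity hypothesis, Proposition \ref{propoinf} guarantees that the infinitesimal holonomy Lie algebra has constant dimension, so assumptions (A1)--(A3) are in force and, by the discussion following Proposition \ref{verticalpart}, $\Delta_P$ is hypoelliptic with the Harnack inequality of Theorem \ref{harnack}. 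Hence Proposition \ref{prop:star-rec-bundle} applies: there is a $\ast$-recurrent set $\widetilde{X} \subset P$ lying over $X$, one representative per element of $P/G_0$ above each point of $X$.

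Next I would invoke the Lyons--Sullivan construction of Section \ref{basic-disc} for the diffusion on $P$ generated by $\tfrac12 \Delta_P$ (case 3 of the list of examples, which as noted escapes precompact sets and, by $\ast$-recurrence of $\widetilde{X}$, admits the construction verbatim). This yields a Markov chain on $\widetilde{X}$ with transition probabilities $p(\tilde x, \tilde y) = \mu_{\tilde x}(\tilde y)$ such that, by Proposition \ref{prop:lyons-sullivan} applied componentwise to $F_\tau$,
\[
F_\tau(\tilde x) = \sum_{\tilde y \in \widetilde X} p(\tilde x, \tilde y) F_\tau(\tilde y)
\]
for every $\tilde x \in \widetilde X$. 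The remaining work is the translation into groupoid language (Step 4): for each pair $(\tilde x, \tilde y)$ with $p(\tilde x, \tilde y) > 0$, the frames $\tilde x : \mathbb{R}^d \to T_{\pi(\tilde x)} M$ and $\tilde y : \mathbb{R}^d \to T_{\pi(\tilde y)} M$ compose to a linear isometry $\sigma_{\tilde x, \tilde y} := \tilde y \circ \tilde x^{-1} : T_{\pi(\tilde x)}M \to T_{\pi(\tilde y)}M$, and one takes $\mathcal{G}$ to be the groupoid over $X$ generated by all such $\sigma_{\tilde x, \tilde y}$, with $\mu$ defined by pushing forward the $p(\tilde x, \cdot)$ under this correspondence. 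The equivariance identity $F_\tau(\sigma g) = g^{-1} F_\tau(\sigma)$ from Definition \ref{def:scalarization}, together with $F_\tau(\tilde x) = \tilde x^{-1} \tau_{\pi(\tilde x)}$, converts the scalar identity above into exactly $\tau_x = \sum_{\sigma \in \mathcal{G}_x} \mu_x(\sigma)\, \sigma^{-1} \tau_{t(\sigma)}$ — the $\mu$-harmonicity of Definition \ref{def:mu-harmonic}. For the simply connected case, $M = B$'s universal cover has $\pi_1(M) = 1$; one should check that $P$ may then be taken so that $\widetilde X$ meets each fiber $\pi^{-1}(y)$ in a single point (using that $P/G_0$ over a point is governed by $\pi_1$ and the holonomy group, both trivial or connected here), forcing $\#\mathcal{G}_x^y = 1$.

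The main obstacle I expect is the bookkeeping in Step 4: making the passage from the chain on $\widetilde{X} \subset P$ to the groupoid $\mathcal{G}$ over $X$ well-defined and genuinely independent of the choices of representative frames, and verifying that the scalar harmonicity on $P$ transcribes correctly to tensor harmonicity on $X$ under the $g^{-1}$-action. One must be careful that $\widetilde X$ contains possibly several frames over a given $x \in X$ (one per coset in $P/G_0$ above $x$), so the groupoid $\mathcal{G}_x$ receives contributions from all of them; checking that the resulting family of measures $\mu$ is consistent — i.e. that the defining identity holds at $x$ regardless of which frame over $x$ we scalarize through — is where the equivariance of $F_\tau$ does the essential work, and is the step most in need of care. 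Everything else (finiteness, the probability-measure property of $\mu$, boundedness of $F_\tau$ following from boundedness of $\tau$ and the fact that frames are isometries) is routine given the results already established.
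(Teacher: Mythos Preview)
Your proposal is correct and follows essentially the same route as the paper: scalarize the harmonic tensor via Proposition~\ref{scalLap}, invoke Proposition~\ref{prop:star-rec-bundle} to obtain the $\ast$-recurrent $\widetilde{X}\subset P$, apply Proposition~\ref{prop:lyons-sullivan} componentwise, and then unpack $F_\tau(\tilde x)=\tilde x^{-1}\tau_{\pi(\tilde x)}$ to rewrite the discretized identity in terms of the isometries $\tilde y\circ\tilde x^{-1}$. If anything, you are more explicit than the paper's own proof about the bookkeeping in Step~4 and about the simply connected case, both of which the paper dispatches in a single line of re-labeling.
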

  
\begin{proof}
Let $\widetilde{X}$ be the subset of $P$ from Proposition \ref{prop:star-rec-bundle}. According to that proposition, $\widetilde{X}$ is $\ast$-recurrent for the horizontal Brownian motion on $P$. 

Now let $\tau\in\mathcal{T}^r_s(M)$ be a tensor field on $M$ which is harmonic for the covariant Laplacian. Then the scalarization $F_\tau$ is harmonic for the horizontal Laplacian on $P$. By Proposition \ref{prop:lyons-sullivan}, we can discretize $F_\tau$, writing 

\[ F_\tau(u) = \sum_{v\in \tilde{X}} p(u,v)F_\tau (v).
\] 
Unpacking the definition of $F_\tau$, this means that
\[ 
\tau_{\pi(u)} = \sum_{v\in \tilde{X}} p(u,v) u\circ v^{-1}\tau_{\pi(v)}  = \sum_{v\in \tilde{X}} p(u,v) (v\circ u^{-1})^{-1}\tau_{\pi(v)}.
\] 
In this sum, $v\circ u^{-1}$ is an isometry $T_{\pi(u)}M\rightarrow T_{\pi(v)} M$. Therefore we can re-label the addends in terms of the base points and $\mathcal{G}_x$, so that 
\[
\tau_{x} = \sum_{\sigma\in\mathcal{G}_x} \mu_x(\sigma)\sigma^{-1}\tau_{t(\sigma)}
\] 
as claimed. 
\end{proof}

\end{document}